\newcommand\restr[2]{{% we make the whole thing an ordinary symbol
  \left.\kern-\nulldelimiterspace % automatically resize the bar with \right
  #1 % the function
  \vphantom{\big|} % pretend it's a little taller at normal size
  \right|_{#2} % this is the delimiter
  }}
\tikzset{
	dot diameter/.store in=\dot@diameter,
	dot diameter=2pt,
	dot spacing/.store in=\dot@spacing,
	dot spacing=9pt,
	dots/.style={
		line width=\dot@diameter,
		line cap=round,
		dash pattern=on 0pt off \dot@spacing
	}
}
\def\BState{\State\hskip-\ALG@thistlm}
\numberwithin{equation}{section}
\newtheorem{theorem}{Theorem}[section]
\newtheorem{cor}[theorem]{Corollary}
\newtheorem{lem}[theorem]{Lemma}
\newtheorem{rem}[theorem]{Remark}
\newcommand{\mr}{\textup{mr}}
\title{On the spectral radius of clique trees with a given\\ zero forcing number }
\author{Joyentanuj Das\footnote{Department of Applied Mathematics,
National Sun Yat-sen University, Gushan District, Kaohsiung City, 804, Taiwan (ROC) \indent  Email: joyentanuj@gmail.com, joyentanuj@math.nsysu.edu.tw }}
\date{}
\begin{document}

\maketitle

\begin{abstract}
Let $G(n,k)$ be the class of clique trees on $n$ vertices and zero forcing number $k$, where $\left \lfloor \frac{n}{2} \right \rfloor + 1 \le k \le n-1$ and each block is a clique of size at least $3$. In this article, we proved the existence and uniqueness of a clique tree in $G(n,k)$ that attains maximal spectral radius among all graphs in $G(n,k)$. We also provide an upper bound for the spectral radius of the extremal graph.
\end{abstract}

\noindent {\sc\textsl{Keywords}:} complete graphs, clique trees, zero forcing number, spectral radius.

\noindent {\textbf{MSC}:}   05C50, 05C15, 15A18

\section{Introduction}\label{sec:intro}
Let $G=(V(G),E(G))$  be a finite, simple, connected graph with $V(G)$ as the set of vertices and $E(G)$ as the set of edges in $G$.  We write $u\sim v$ to indicate that the vertices $u,v \in V(G)$ are adjacent in $G$.   The degree of the vertex $v$, denoted by $d_G(v)$, equals the number of vertices in $V$ that are adjacent to $v$.   A graph $H$ is said to be a subgraph of $G$ if $V(H) \subset V(G)$ and $E(H) \subset E(G)$. For any subset $S \subset V (G)$, a subgraph $H$ of $G$ is said to be an induced subgraph with vertex set $S$, if $H$ is a maximal subgraph of $G$ with vertex set $V(H)=S$. We write $|S|$ to denote the cardinality of the set $S$. 

Let $G=(V(G),E(G))$ be a graph. For $u,v \in V$, the adjacency matrix of the graph  $G$ is,    ${A}(G) = [a_{uv}]$, where $a_{uv}= 1 $  if $u\sim v$ and $0$ otherwise.  For any column vector $\mathbf{x}$, if $x_u$  represent the entry of $\mathbf{x}$ corresponding to vertex $u\in V$, then 
$$\mathbf{x}^{\top} {A}(G)\mathbf{x}=  2\sum_{u\sim w}x_{u}x_{w}, $$
where $\mathbf{x}^{\top}$ represent  the transpose of  $\mathbf{x}$. 

For a connected graph $G$    on $n \geq 2$ vertices, by the Perron--Frobenius theorem, the spectral radius  $\rho(G)$ of ${A}(G)$ is a simple positive eigenvalue and the associated eigenvector is entry-wise positive (for details see~\cite{Bapat}). We will refer to such an eigenvector as the Perron vector of $G$.  Now we state a few known results on spectral radius useful in our subsequent calculations. By the Min-max theorem, we have
\begin{equation}\label{eqn:eq_sp_rd}
\rho(G) = \max_{\mathbf{x}\neq \mathbf{0}}\dfrac{\mathbf{x}^{\top} {A}(G)\mathbf{x}}{\mathbf{x}^{\top} \mathbf{x}}= \max_{\mathbf{x}\neq \mathbf{0}}\dfrac{2\sum_{u\sim w}x_{u}x_{w}}{\sum_{u\in V}x_{u}^2}.
\end{equation}
Furthermore, in Eqn.~\eqref{eqn:eq_sp_rd}, the maximum attained if and only if $\mathbf{x}$ is a Perron vector of $\rho(G).$

Given a graph $G=(V(G),E(G))$, for $u, v \in V(G)$ we will use  $G + uv$ to denote the graphs obtained from $G$  by adding an edge $uv \notin E(G)$ and we have the following result.
\begin{lem}\label{lem:sr_edge}\cite{Bapat}
	If $G$ is a graph such that for $u, v \in V(G)$, $uv \notin E(G)$, then $\rho(G) < \rho(G+uv)$. 
\end{lem}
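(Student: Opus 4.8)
The plan is to use the Perron vector of $G$ as a test vector in the variational (Min-max) characterization of $\rho(G+uv)$ recorded in Eqn.~\eqref{eqn:eq_sp_rd}, and to extract the strict inequality from the entry-wise positivity of the Perron vector guaranteed by the Perron--Frobenius theorem.

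First I would let $\mathbf{x}$ denote the Perron vector of $G$, so that by the attainment clause of Eqn.~\eqref{eqn:eq_sp_rd} we have $\mathbf{x}^{\top} A(G)\mathbf{x}/(\mathbf{x}^{\top}\mathbf{x}) = \rho(G)$. Since $G$ is connected, Perron--Frobenius guarantees that $\mathbf{x}$ is entry-wise positive; in particular $x_u>0$ and $x_v>0$. Next I would record how the two adjacency matrices differ: inserting the single edge $uv$ changes only the $(u,v)$ and $(v,u)$ entries from $0$ to $1$, so $A(G+uv)=A(G)+E_{uv}$, where $E_{uv}$ is the symmetric matrix with a $1$ in positions $(u,v)$ and $(v,u)$ and $0$ elsewhere. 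Consequently $\mathbf{x}^{\top} A(G+uv)\mathbf{x}=\mathbf{x}^{\top} A(G)\mathbf{x}+2x_ux_v$.

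With these two ingredients the conclusion follows immediately. Applying Eqn.~\eqref{eqn:eq_sp_rd} to $G+uv$ with the particular nonzero choice $\mathbf{x}$ yields
\[
\rho(G+uv)\;\ge\;\frac{\mathbf{x}^{\top} A(G+uv)\mathbf{x}}{\mathbf{x}^{\top}\mathbf{x}}\;=\;\rho(G)+\frac{2x_ux_v}{\mathbf{x}^{\top}\mathbf{x}}.
\]
Because $x_u,x_v>0$ and $\mathbf{x}^{\top}\mathbf{x}>0$, the added term is strictly positive, giving $\rho(G+uv)>\rho(G)$.

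I expect the only point demanding attention—rather than a genuine obstacle—to be the justification of \emph{strictness}. The Rayleigh-quotient step by itself delivers only a weak inequality, so the strict gap rests entirely on having both $x_u>0$ and $x_v>0$; this is precisely where the connectedness of $G$, and hence the strict positivity of its Perron vector, is essential. Were $G$ permitted to be disconnected, the test vector could vanish at $u$ or $v$ and the argument would collapse to a non-strict inequality.
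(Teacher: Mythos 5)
Your argument is correct: the Rayleigh-quotient comparison with the Perron vector of $G$, together with the strict positivity $x_u,x_v>0$ coming from connectedness, gives exactly the strict inequality $\rho(G+uv)>\rho(G)$. The paper offers no proof of its own here (it simply cites the textbook \cite{Bapat}), and your proof is the standard argument that reference supplies, so there is nothing to reconcile.
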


Let $A$ be a real symmetric matrix whose rows and columns are indexed by $V = \{1,2,\cdots,n\}$. Let $\{V_1,V_2,\cdots,V_k\}$ be a partition of $V$ such that the block partition of the matrix $A$ according to $\{V_1,V_2,\cdots,V_k\}$ can be expressed as
\begin{equation*}\label{eqn:A}
	A = \begin{bmatrix}
		A_{11} & A_{12}       &  \dots & A_{1k} \\
		A_{21} & A_{22}       &  \dots & A_{2k} \\
		\vdots & \vdots & \ddots & \vdots\\
		A_{k1} & A_{k2}    &  \dots & A_{kk}
	\end{bmatrix},
\end{equation*}
where $A_{ij}$ denotes the block formed by intersection of the rows in $V_i$ and the columns in $V_j$. Let $q_{ij}(A)$ denote the average row sum of $A_{ij}$. Then, the quotient matrix of $A$ with respect to the partition $\{V_1,V_2,\cdots,V_k\}$ is given by $$Q(A) = [q_{ij}(A)].$$ Moreover, if the row sum of each block $A_{ij}$ is constant then we say that the partition is equitable and $Q(A)$ is called an equitable quotient matrix of $A$. There is a nice relation between the spectrum of $A$ and that of $Q(A)$, which is stated now as a theorem.

\begin{theorem}\label{thm:equitable}~\cite{You}
	Let $A$ be a real symmetric matrix such that it has an equitable quotient matrix $Q(A)$, then, $\sigma(Q(A)) \subset \sigma(Q(A))$. Moreover, if $A$ is nonnegative, then $\rho(A) = \rho(Q(A))$, \textit{i.e.}, the spectral radius of $Q(A)$ is actually the spectral radius of $A$. 
\end{theorem}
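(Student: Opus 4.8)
The plan is to establish the (evidently intended) containment $\sigma(Q(A)) \subseteq \sigma(A)$ together with the spectral-radius equality by means of the characteristic matrix of the partition. Write $n_i = |V_i|$ and let $S$ be the $n \times k$ matrix whose $(v,i)$ entry is $1$ when $v \in V_i$ and $0$ otherwise; set $D = \mathrm{diag}(n_1,\dots,n_k)$, so that $S^{\top}S = D$ and $S$ has full column rank. My first step is to translate the equitable hypothesis into the single matrix identity
\[
AS = SQ(A).
\]
Indeed, for $v \in V_j$ the $(v,i)$ entry of $AS$ is $\sum_{w\in V_i}a_{vw}$, the row sum of the block $A_{ji}$ along row $v$; the equitable condition says this equals the constant $q_{ji}(A)$, which is precisely the $(v,i)$ entry of $SQ(A)$.

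Given this identity the containment of spectra is immediate: if $Q(A)v=\lambda v$ with $v \neq \mathbf 0$, then $A(Sv)=S(Q(A)v)=\lambda(Sv)$, and $Sv \neq \mathbf 0$ because $S$ has full column rank. Hence every eigenvalue of $Q(A)$ is an eigenvalue of $A$, so $\sigma(Q(A)) \subseteq \sigma(A)$; in particular all eigenvalues of $Q(A)$ are real and $\rho(Q(A)) \le \rho(A)$.

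For the reverse inequality I would exploit the symmetry of $A$. The identity $AS=SQ(A)$ shows that the column space $\mathcal C = \mathrm{col}(S)$ is $A$-invariant; since $A$ is symmetric, $\mathcal C^{\perp}$ is $A$-invariant as well, and therefore $A$ commutes with the orthogonal projection $P$ onto $\mathcal C$ (explicitly $P = SD^{-1}S^{\top}$, the operator replacing each entry by the average over its part). Now I invoke Perron--Frobenius: as $A$ is nonnegative there is a nonnegative eigenvector $\mathbf x \neq \mathbf 0$ with $A\mathbf x = \rho(A)\mathbf x$. Then $A(P\mathbf x)=P(A\mathbf x)=\rho(A)(P\mathbf x)$, and $P\mathbf x \neq \mathbf 0$ because averaging a nonnegative, nonzero vector over the parts cannot annihilate it. Thus $P\mathbf x$ is an eigenvector of $A$ for $\rho(A)$ lying in $\mathcal C$; writing $P\mathbf x = Sv$ and using full column rank of $S$ in $S(Q(A)v)=A(Sv)=\rho(A)(Sv)=S(\rho(A)v)$ yields $Q(A)v=\rho(A)v$. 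Hence $\rho(A)\in\sigma(Q(A))$, so $\rho(A)\le\rho(Q(A))$, and combined with the previous paragraph this gives $\rho(A)=\rho(Q(A))$.

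The bookkeeping (the block-row-sum computation and the rank of $S$) is routine; the step carrying the real content is the reverse inequality, whose crux is the commutation $AP=PA$ together with the nonnegativity of the Perron vector guaranteeing $P\mathbf x\neq\mathbf 0$. I would emphasize that it is the hypothesis that $A$ is real symmetric, not merely nonnegative, that makes $\mathcal C^{\perp}$ invariant and hence lets the Perron eigenvalue descend to the quotient; without symmetry the averaging argument would fail.
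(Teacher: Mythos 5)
The paper offers no proof of this statement---it is quoted from \cite{You}---so there is nothing internal to compare against; note also that the statement as printed contains a typo ($\sigma(Q(A))\subset\sigma(Q(A))$ should read $\sigma(Q(A))\subseteq\sigma(A)$), which you correctly identified and repaired. Your argument is correct and complete. The identity $AS=SQ(A)$ is the right encoding of equitability, full column rank of $S$ gives $\sigma(Q(A))\subseteq\sigma(A)$ and hence $\rho(Q(A))\le\rho(A)$, and the reverse inequality via the commuting orthogonal projection $P=SD^{-1}S^{\top}$ applied to a nonnegative Perron eigenvector is sound: $P\mathbf{x}\ne\mathbf{0}$ because the part-sums $(S^{\top}\mathbf{x})_i=\sum_{v\in V_i}x_v$ of a nonnegative nonzero vector cannot all vanish, and full column rank of $S$ then forces $Q(A)v=\rho(A)v$. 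The one point worth flagging is that your route genuinely uses the symmetry of $A$ (to make $\mathcal{C}^{\perp}$ invariant and hence $AP=PA$), whereas the cited reference proves the spectral-radius equality for arbitrary nonnegative matrices admitting an equitable quotient, by a different (eigenvector-lifting) argument; since the theorem as stated here assumes $A$ real symmetric, your proof fully covers the claim actually being invoked, and it is arguably more transparent in this special case.
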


A vertex $v$ of a connected graph $G$ is a cut vertex of $G$ if $G - v$ is disconnected. A block of the graph $G$ is a maximal connected subgraph of G that has no cut-vertex. Given two blocks  $F$ and $H$  of graph $G$ are said to be adjacent if they are connected via a cut-vertex. We denote $F\circledcirc H$, to represent the induced subgraph on the vertex set of two adjacent blocks $F$ and $H$.

 A complete graph is a graph where each vertex is adjacent to every other vertex. A complete graph on $n$ vertices is denoted by $K_n$. A connected graph is called a clique tree if each of its blocks is a clique.  Let $G$ be a clique tree with $d_1$ blocks of $K_{n_1}$, $d_2$ blocks of $K_{n_2}$, so on up to $d_b$ blocks of $K_{n_b}$, then we write $G$ with blocks $ K_{n_1}^{(d_1)} - K_{n_2}^{(d_2)}- \cdots - K_{n_b}^{(d_b)}$ (for example see Figure~\ref{fig:block}). Here the above notation only gives information about the blocks of $G$, but not about the structure of the graph. But for a special case, if $G$ has a central cut vertex, then all the blocks are adjacent via the central cut vertex and we denote it by
$$G = K_{n_1}^{(d_1)} \circledcirc K_{n_2}^{(d_2)}\circledcirc \cdots \circledcirc K_{n_b}^{(d_b)}.$$

\begin{figure}[ht]
	\centering
	\begin{tikzpicture}[scale=1.0]
		\Vertex[size=.1,color=red]{A}
		\Vertex[y=2,size=.1,color=black]{B}
		\Vertex[x=2,y=0,size=.1,color=red]{C} 
		\Vertex[x=2,y=2,size=.1,color=black]{D}
		\Vertex[x=2,y=-2,size=.1,color=black]{E}
		\Vertex[x=4,y=0,size=.1,color=black]{F} 
		\Vertex[x=4,y=-2,size=.1,color=black]{G}
		
		\Vertex[x=3,y=2,size=.1,color=black]{D1}
		\Vertex[x=4,y=1,size=.1,color=black]{F1}

		\Vertex[x=-2,y=-.7,size=.1,color=black]{D2}
		\Vertex[x=-2,y=.7,size=.1,color=black]{F2}	
		
		\Vertex[x=-0.7,y=-2,size=.1,color=black]{D3}
		\Vertex[x=0.7,y=-2,size=.1,color=black]{E3}	
		
		\Edge[lw=2pt](A)(B)
		\Edge[lw=2pt](A)(C)
		\Edge[lw=2pt](A)(D)
		\Edge[lw=2pt](B)(C)
		\Edge[lw=2pt](B)(D)
		\Edge[lw=2pt](C)(D)
		
		\Edge[lw=2pt](C)(E)
		\Edge[lw=2pt](C)(F)
		\Edge[lw=2pt](C)(G)
		\Edge[lw=2pt](E)(F)
		\Edge[lw=2pt](E)(G)
		\Edge[lw=2pt](F)(G)
		
		\Edge[lw=2pt](C)(D1)
		\Edge[lw=2pt](C)(F1)
		\Edge[lw=2pt](D1)(F1)
		
		\Edge[lw=2pt](A)(D2)
		\Edge[lw=2pt](A)(F2)
		\Edge[lw=2pt](D2)(F2)
		
		\Edge[lw=2pt](A)(D3)
		\Edge[lw=2pt](A)(E3)
		\Edge[lw=2pt](E3)(D3)
	\end{tikzpicture}
\caption{A clique tree with blocks  $K_{3}^{(3)}-K_{4}^{(2)}$.} \label{fig:block}
\end{figure}
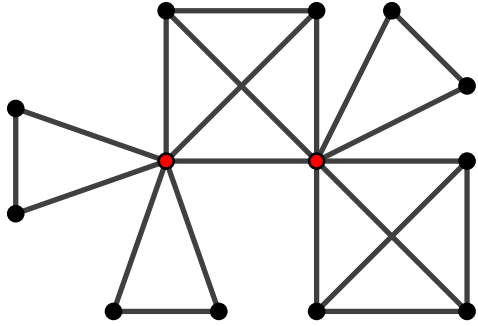

A block $H$ of a clique tree $G$ is a pendent block of $G$ if $H$ has exactly one cut-vertex of $G$. Let $v$ be a cut-vertex of $G$. If $G - v$ consists of two disjoint graphs $W_1$ and $W_2$ and let $G_i (i = 1, 2)$ be the subgraph of $G$ induced by $\{v\} \cup V(W_i)$, then $G$ is called the vertex-sum at $v$ of the two graphs $G_1$ and $G_2$, and denoted by $G = G_1 \oplus_v G_2$. For a vertex $v$ in a graph $G$, the block index of $v$ is the number of blocks which share the vertex $v$ and is denoted by $bi_G(v)$.

The spectral radius of a graph has been extensively studied subject to various graph theoretic constraints. In spectral graph theory, the maximization and minimization of the spectral radius of a given class of a graph and the problem of determining the extremal graphs find a particular interest among researchers. One such result due to Brualdi and Solheid in~\cite{Brualdi} is an important motivation for our study. In this article, the authors obtained the graphs that maximize the spectral radius amongst graphs with a fixed number of vertices in a given class of graphs. Later, several results in similar contexts has been published (for example see~\cite{Conde, Feng,Guo,Chu, Lui,Lu, Lou, Xiao, Xu}). For a few more interesting results on spectral graph theory reader also may refer~\cite{DS}.

The color-change rule on a graph $G$ is defined as follows. Suppose that $G$ is a graph with each vertex colored either white or blue.  If $u$ is a blue vertex in $G$ and exactly one neighbor $v$ of $u$ is white, then change the color of $v$ to blue, we say that $u$ forces $v$ and write $u \to v$. Given a graph $G$, a subset $S$ of vertices is called a zero forcing set for $G$ if it has the property that when initially the vertices in $S$ are colored blue and the remaining vertices are colored white, then applying the color-change rule all the vertices of $G$ become blue. The smallest size of a zero forcing set for $G$ is denoted by $Z(G)$ and is called the zero forcing number of $G$. A zero forcing set for $G$ of size $Z(G)$ is called a minimum zero forcing set of $G$.

The minimum rank of a graph $G$ is defined to be the smallest possible rank over all symmetric real matrices whose $ij-$th entry, for $i \ne j$ is nonzero whenever $i \sim j$ in $G$ and is zero otherwise. The minimum rank of $G$ is denoted by $\textup{mr}(G)$. The maximum nullity of $G$, denoted by $M(G)$ is defined as $M(G) = n - \textup{mr}(G)$. In~\cite{Bar}, it was shown that $M(G) \le Z(G)$ for any graph $G$. Zero forcing number of a graph has been extensively studied and has connections to inverse eigenvalue problem for graphs. Recently, the zero forcing number for graphs has been studied relating it with other graph parameters like connected domination number, degree sequence, path cover, perfect dominating set, chromatic number, to name a few. In a more recent paper~\cite{Zhang}, the authors find the extremal graph among the class of trees with zero forcing number at most $k$ that attains maximum spectral radius. This opens a new direction which connects spectral graph theory and zero forcing number of graphs. Motivated by the work done in~\cite{Zhang}, in this article we study the class of clique trees(which is a natural generalization of trees), where each block has size at least $3$ and a fixed zero forcing number to find the extremal graph where the maximum spectral radius is attained.

The article is organized as follows: In Section~\ref{sec:pre} we state some existing results about zero forcing number and prove some lemmas that will be essential for the proof of the main result of the article. Later, in Section~\ref{sec:main} we state and prove the main theorem of the article. Finally, we give an upper bound of the maximum spectral radius in terms of the zero forcing number.

\section{Preliminary Results}\label{sec:pre}

If a graph $G$ is a vertex-sum of two graphs $G_1$ and $G_2$ at $v$, then the following theorem gives us a relation between the minimum rank of $G$ and minimum rank of $G_1$ and $G_2$.

\begin{theorem}[Cut-vertex Reduction Theorem ~\cite{Bar0}]
	If $G = G_1 \oplus_v G_2$, then $\mr(G) = \min\{\mr(G_1)+\mr(G_2), \mr(G_1-v)+\mr(G_2-v)+2\}$.
\end{theorem}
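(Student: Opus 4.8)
The plan is to argue directly with the defining matrix family. Write $\mathcal S(G)$ for the set of real symmetric matrices $A$ whose off-diagonal entry $a_{ij}$ ($i\neq j$) is nonzero exactly when $i\sim j$, the diagonal being unconstrained, so that $\mr(G)=\min\{\operatorname{rank}A : A\in\mathcal S(G)\}$. Since $v$ is a cut vertex with $G-v=W_1\sqcup W_2$ and $G_i$ induced on $\{v\}\cup V(W_i)$, every $A\in\mathcal S(G)$ takes, after ordering the vertices as $V(W_1),v,V(W_2)$, the arrowhead block form $A=\left[\begin{smallmatrix} B_1 & b_1 & 0 \\ b_1^\top & d & b_2^\top \\ 0 & b_2 & B_2 \end{smallmatrix}\right]$, where the two off-diagonal zero blocks record the absence of edges between $W_1$ and $W_2$, $B_i\in\mathcal S(G_i-v)$, and each $b_i$ has the nonzero pattern dictated by the neighbours of $v$ in $G_i$. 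I would prove the two inequalities separately.

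For the upper bound I would exhibit two competing constructions. For the first term, take $A_i\in\mathcal S(G_i)$ of rank $\mr(G_i)$ and overlay them at $v$, summing the two diagonal $v$-entries; writing the result as $\tilde A_1+\tilde A_2$ (each $A_i$ padded by zeros off its own index set) produces a matrix in $\mathcal S(G)$ whose rank is at most $\mr(G_1)+\mr(G_2)$ by subadditivity of rank. For the second term, take optimal $B_i\in\mathcal S(G_i-v)$, form $\operatorname{diag}(B_1,B_2)$, then restore the row and column of $v$ with the correct nonzero pattern and an arbitrary diagonal; bordering a symmetric matrix by one row and its transpose raises the rank by at most $2$, so this lands in $\mathcal S(G)$ with rank at most $\mr(G_1-v)+\mr(G_2-v)+2$. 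Together these give $\mr(G)\le\min\{\mr(G_1)+\mr(G_2),\ \mr(G_1-v)+\mr(G_2-v)+2\}$.

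For the lower bound I would take an arbitrary $A\in\mathcal S(G)$ in the above form, set $r_i=\operatorname{rank}B_i$, and split on whether the coupling vectors lie in the column spaces. The clean step is the observation that \emph{if} $b_i\in\operatorname{col}(B_i)$, say $b_i=B_ix_i$, then completing $B_i$ to $\left[\begin{smallmatrix} B_i & b_i \\ b_i^\top & x_i^\top B_i x_i\end{smallmatrix}\right]$ keeps the rank at $r_i$, and this matrix lies in $\mathcal S(G_i)$, whence $r_i\ge\mr(G_i)$. Thus, if both $b_1\in\operatorname{col}(B_1)$ and $b_2\in\operatorname{col}(B_2)$, then deleting the row and column of $v$ already yields $\operatorname{rank}A\ge r_1+r_2\ge\mr(G_1)+\mr(G_2)$. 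If instead $b_1\notin\operatorname{col}(B_1)$ or $b_2\notin\operatorname{col}(B_2)$, then the vector $(b_1,b_2)$ is independent of the columns of $\operatorname{diag}(B_1,B_2)$, and bordering then raises the rank by exactly $2$, so $\operatorname{rank}A=r_1+r_2+2\ge\mr(G_1-v)+\mr(G_2-v)+2$. In every case $\operatorname{rank}A\ge\min\{\cdots\}$, and since $A$ was arbitrary this gives $\mr(G)\ge\min\{\cdots\}$; combined with the upper bound, equality follows.

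The point I expect to be delicate is exactly the completion lemma of the third paragraph. Naively one only has $\operatorname{rank}A\ge r_1+r_2$ together with $\operatorname{rank}A[V(G_i)]\ge\mr(G_i)$, and combining these by hand forces an awkward case analysis on how much the $v$-line inflates each rank, with a genuine off-by-one gap in the case where both borders jump and both diagonals mismatch. Recognizing that $b_i\in\operatorname{col}(B_i)$ forces $r_i\ge\mr(G_i)$ outright is what collapses that analysis; the remaining ingredients (the exact rank jumps of $0$, $1$, or $2$ when symmetrically bordering, and the subadditivity used for the upper bound) are routine linear algebra.
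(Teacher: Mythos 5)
The paper does not prove this statement; it is quoted verbatim from the cited reference of Barioli, Fallat and Hogben, so there is no internal proof to compare against. Your argument is correct and self-contained. Both inequalities are sound: the two constructions for the upper bound (overlaying optimal matrices for $G_1,G_2$ at $v$, versus bordering $\operatorname{diag}(B_1,B_2)$ by the $v$-line) are standard and valid, and your case split for the lower bound is airtight. The completion lemma you flag as the delicate point does work exactly as you state: if $b_i=B_ix_i$, then the bordered column equals $\bigl[\begin{smallmatrix}B_i\\ b_i^\top\end{smallmatrix}\bigr]x_i$ and the bordered row is handled by symmetry, so the rank stays at $r_i$ while the matrix realizes the pattern of $G_i$, giving $r_i\ge\mr(G_i)$; and when some $b_i\notin\operatorname{col}(B_i)$, the symmetric bordering of $\operatorname{diag}(B_1,B_2)$ raises the rank by exactly $2$ (the new column is outside the column space, and the new row entry cannot repair this because membership of $(b_1,b_2;d)$ in the enlarged column space would force $b_i\in\operatorname{col}(B_i)$). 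For what it is worth, the cited source organizes the same content differently, through the rank-spread $r_v(H)=\mr(H)-\mr(H-v)\in\{0,1,2\}$ and the identity $r_v(G)=\min\{r_v(G_1)+r_v(G_2),\,2\}$, which generalizes more cleanly to a cut vertex lying in many components; your direct arrowhead-matrix argument is more elementary and proves exactly the two-summand statement needed here.
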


From the cut-vertex reduction theorem, the following corollary follows if $G$ is a vertex-sum of two graphs $G_1$ and $G_2$ at $v$.
\begin{cor}\label{cor:v}~\cite{Huang}
	$M(G_1 \oplus_v G_2) = \max\{M(G_1)+M(G_2),M(G_1-v)+M(G_2-v)\} - 1$.
\end{cor}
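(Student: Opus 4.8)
The plan is to derive the corollary directly from the Cut-vertex Reduction Theorem by translating every minimum-rank quantity into a maximum-nullity quantity via the defining relation $M(H) = |V(H)| - \mr(H)$. The only structural input I need is the vertex count of the vertex-sum: since $G = G_1 \oplus_v G_2$ is obtained by identifying $G_1$ and $G_2$ along the single shared cut-vertex $v$ (so that $G - v = W_1 \cup W_2$ with $V(G_i) = \{v\} \cup V(W_i)$), we have $|V(G)| = |V(G_1)| + |V(G_2)| - 1$.

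First I would write $M(G) = |V(G)| - \mr(G)$ and substitute the reduction formula for $\mr(G)$, obtaining
$$M(G) = \bigl(|V(G_1)| + |V(G_2)| - 1\bigr) - \min\{\mr(G_1)+\mr(G_2),\ \mr(G_1-v)+\mr(G_2-v)+2\}.$$
Then I would invoke the elementary identity $c - \min\{a,b\} = \max\{c-a,\ c-b\}$ to move the constant inside the minimum, converting it to a maximum and splitting the computation into two branches.

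For the first branch, regrouping yields $(|V(G_1)| - \mr(G_1)) + (|V(G_2)| - \mr(G_2)) - 1 = M(G_1) + M(G_2) - 1$. For the second branch, the key bookkeeping observation is that $G_i - v$ has exactly one fewer vertex than $G_i$, so $\mr(G_i - v) = (|V(G_i)| - 1) - M(G_i - v)$; substituting these expressions and cancelling the accumulated constants (the $-1$ from the ambient vertex count, the two $-1$'s arising from deleting $v$ in each factor, and the $+2$ carried by the reduction theorem) collapses the expression to $M(G_1 - v) + M(G_2 - v) - 1$. Taking the maximum of the two branches and factoring out the common $-1$ then produces exactly the claimed identity $M(G) = \max\{M(G_1)+M(G_2),\ M(G_1-v)+M(G_2-v)\} - 1$.

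The argument is entirely routine, and I do not anticipate a genuine obstacle; the only place demanding care is the second branch, where the extra $+2$ in the reduction theorem must be reconciled against the two vertex deletions so that precisely a single $-1$ survives outside the maximum. Everything else is linear bookkeeping in the vertex counts.
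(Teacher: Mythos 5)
Your derivation is correct and follows exactly the route the paper intends: the paper states that the corollary ``follows from the cut-vertex reduction theorem'' and cites \cite{Huang} without writing out the computation, and your argument is precisely that computation, with the vertex-count bookkeeping ($|V(G)| = |V(G_1)|+|V(G_2)|-1$ and the reconciliation of the $+2$ against the two deleted copies of $v$) carried out correctly. Nothing to add.
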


It is well known that for any graph $G$, the maximum nullity of $G$ bounded above by the zero forcing number of $G$. The next theorem gives us a class of graphs where the equality is attained.

\begin{theorem}\label{thm:Z(G)=M(G)}~\cite{Huang}
	Let $G$ be a clique tree, then $Z(G) = M(G)$.
\end{theorem}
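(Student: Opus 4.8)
The inequality $M(G) \le Z(G)$ holds for every graph~\cite{Bar}, so it suffices to prove the reverse inequality $Z(G) \le M(G)$ for a clique tree $G$. The plan is to induct on the number of blocks of $G$. If $G$ has a single block, then $G = K_m$ for some $m$, and both quantities equal $m-1$: a set of $m-1$ blue vertices forces the last one, whereas $m-2$ blue vertices leave every blue vertex with at least two white neighbors, and on the algebraic side $\mr(K_m)=1$ gives $M(K_m)=m-1$. This settles the base case.

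For the inductive step I would choose a pendant block $K_m$ of $G$ with cut-vertex $v$ and write $G = K_m \oplus_v H$, where $H$ is the connected clique tree obtained from $G$ by deleting the $m-1$ non-cut vertices $u_1,\dots,u_{m-1}$ of that block; thus $H$ has exactly one fewer block than $G$. Applying Corollary~\ref{cor:v} with $G_1=K_m$ and $G_2=H$ gives $M(G)=\max\{(m-1)+M(H),\,(m-2)+M(H-v)\}-1$. I would then invoke the elementary fact that deleting a vertex raises the maximum nullity by at most one, that is $M(H-v)\le M(H)+1$ (equivalently $\mr(H)\le \mr(H-v)+2$, since removing a vertex deletes one row and column). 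This makes the first entry of the maximum dominate, so that $M(G)=(m-2)+M(H)$. By the induction hypothesis $M(H)=Z(H)$, and hence $M(G)=(m-2)+Z(H)$.

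It then remains to exhibit a zero forcing set of $G$ of size $(m-2)+Z(H)$. Let $S$ be a minimum zero forcing set of $H$ and put $B=S\cup\{u_1,\dots,u_{m-2}\}$, leaving $u_{m-1}$ white, so that $|B|=Z(H)+(m-2)$. To see that $B$ forces all of $G$, I would run the forcing process of $S$ inside $H$ and replicate it in $G$. Every force $w\to x$ of that process with $w\ne v$ transfers immediately to $G$, since deleting the pendant block does not change the neighborhood of any vertex other than $v$; in particular $v$ itself is colored by a neighbor lying in $H$ exactly as it is in $H$. The moment $v$ turns blue, the vertex $u_1$ has $u_{m-1}$ as its unique white neighbor and forces it, so the whole block $K_m$ becomes blue; from then on $v$ has no white neighbor inside $K_m$, so each remaining force $v\to x$ of the $H$-process is also valid in $G$. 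Therefore the closure of $B$ contains all of $V(H)$ together with $K_m$, i.e.\ all of $V(G)$, which yields $Z(G)\le|B|=(m-2)+Z(H)=M(G)$ and closes the induction.

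The only delicate point, and the main obstacle, is the interaction at the cut-vertex $v$: while $u_{m-1}$ remains white, $v$ is blocked from performing any force it would perform in $H$, so one cannot naively claim that a zero forcing set of $H$ still forces $H$ inside $G$. The argument above circumvents this by noting that $v$ is always colored by a neighbor in $H$ (never by itself), and that the pendant block $K_m$ is completed immediately afterward, which restores $v$'s ability to force; all other steps of the $H$-process carry over verbatim precisely because only $v$'s neighborhood is enlarged in passing from $H$ to $G$.
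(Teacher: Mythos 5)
The paper offers no proof of this statement at all --- it is imported verbatim from \cite{Huang} --- so there is nothing internal to compare against and your argument must stand on its own. On its own terms, your strategy (induction on the number of blocks, using Corollary~\ref{cor:v} together with $M(H-v)\le M(H)+1$ to pin down $M(G)=(m-2)+M(H)$, and then exhibiting a forcing set by adding all but one non-cut vertex of a pendant block to a minimum zero forcing set of the smaller clique tree) is sound, and the delicate point you identify at the cut vertex is handled correctly: $v$ is always turned blue from inside $H$, the force $u_1\to u_{m-1}$ can be inserted immediately afterwards, and only then does $v$ ever need to act. For clique trees in which every block has size at least $3$ --- which is the only setting in which this paper ever invokes the theorem --- your proof is complete.

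The gap is that the theorem as stated applies to arbitrary clique trees, and a block may be a $K_2$ (every tree is a clique tree under the paper's definition). Your inductive step fails when the pendant block is $K_2$, in two places. First, you substitute $M(K_m-v)=m-2$, which is false for $m=2$ because $M(K_1)=1-\mr(K_1)=1$, not $0$; as a result the identity $M(G)=(m-2)+M(H)$ can fail. Concretely, take $G=K_{1,3}$ with pendant edge $\{v,u_1\}$, so that $H=P_3$: then $M(G)=2$ while $(m-2)+M(H)=0+1=1$. Second, for $m=2$ your set $B$ adds no vertex of the pendant block, and the lone leaf $u_1$ can only ever be forced by $v$ itself, which your simulation of the $H$-process never arranges (the trick of having $u_1$ force $u_{m-1}$ degenerates, since $u_1=u_{m-1}$ is white). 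So the case of $K_2$ blocks --- essentially the tree case, where the standard proofs route through the path cover number --- needs a separate argument. If the theorem is read with this paper's standing hypothesis that every block has size at least $3$, your proof is correct as written; to prove the statement in the generality in which it is quoted, you must either exclude or separately treat pendant $K_2$ blocks.
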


\begin{lem}\label{lem:Z(G)-v}~\cite{Huang}
	If $v$ is a vertex in a graph $G$, then $Z(G - v) -1 \le Z(G) \le Z(G-v)+1$.
\end{lem}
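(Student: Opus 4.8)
The plan is to prove the two inequalities separately. Writing them as $Z(G) \le Z(G-v)+1$ (the stated upper bound) and $Z(G-v) \le Z(G)+1$ (equivalent to the stated lower bound $Z(G-v)-1 \le Z(G)$), the common idea is that a blue vertex never blocks a force, since only \emph{white} neighbours are relevant to the color-change rule. Hence adding or deleting a vertex that stays blue throughout interacts cleanly with any forcing process, and both bounds will follow by replaying a forcing sequence of one graph inside the other.

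For the upper bound I would start from a minimum zero forcing set $S'$ of $G-v$, so that $|S'| = Z(G-v)$, and claim that $S = S' \cup \{v\}$ is a zero forcing set of $G$. To see this, replay in $G$, force by force, the forcing process that $S'$ induces on $G-v$. At the moment a force $u \to w$ is performed in $G-v$, the vertex $w$ is the unique white neighbour of $u$ there; in $G$ the only possible extra neighbour of $u$ is $v$, which is blue throughout because $v \in S$, so $w$ remains the unique white neighbour of $u$ in $G$ and the same force is legal. Thus every vertex of $G-v$ turns blue, and $v$ is blue from the start, so $S$ forces all of $G$. This gives $Z(G) \le |S| = Z(G-v)+1$.

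For the lower bound I would run the argument in the other direction. Fix a minimum zero forcing set $S$ of $G$ together with one valid chronological sequence of forces it induces. Let $b$ be the vertex that $v$ forces in this sequence, if such a vertex exists, and set $S' = (S \setminus \{v\}) \cup \{b\}$ (and $S' = S \setminus \{v\}$ if $v$ forces nothing). Since $b$, being forced, is never a seed, we have $|S'| \le |S| + 1 = Z(G)+1$, so it suffices to show that $S'$ is a zero forcing set of $G-v$. I would establish this by replaying the forcing sequence of $G$ inside $G-v$, skipping the (at most two) forces that have $v$ as the forcing or the forced vertex.

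The only real obstacle is checking that this replay is legal step by step, which I would do by induction on the forcing order: just before each retained force $u \to w$, the blue set produced in $G-v$ should contain $(B \setminus \{v\}) \cup \{b\}$, where $B$ is the blue set at the corresponding moment in $G$. Because $w$ is the unique white neighbour of $u$ in $G$, every other neighbour of $u$ lies in $B$, so deleting $v$ only removes an already-blue neighbour and $w$ stays the unique white neighbour of $u$ in $G-v$, provided $w \ne b$. Since each vertex is forced at most once, $b$ is forced solely by $v$, and hence no retained force targets $b$; pre-seeding $b$ therefore repairs precisely the forcing chain broken by deleting $v$, while the segment of that chain ending at $v$ needs no new seed (its start already lies in $S \setminus \{v\}$, and the segment is empty when $v \in S$). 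This closes the induction, shows $S'$ forces $G-v$, and yields $Z(G-v) \le Z(G)+1$, completing the proof.
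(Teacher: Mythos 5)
Your proof is correct. Note that the paper does not actually prove this lemma --- it is quoted from the reference of Huang, Chang and Yeh without proof --- but your argument is the standard one for this ``vertex spread'' bound: adding $v$ to a minimum zero forcing set of $G-v$ gives the upper bound, and for the lower bound you correctly handle the one delicate point, namely that the vertex $b$ forced by $v$ is the unique vertex whose forcing chain is broken by deleting $v$, so pre-seeding $b$ (which is never the target of a retained force) restores a valid forcing process on $G-v$.
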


\begin{cor}\label{cor:Z(G)}
	Let $G$ be a clique tree such that each block is a clique of size at least three and $v$ is a cut vertex such that $K_m$ is a pendant block that contains $v$, then
	$$Z(G_1 \oplus_v K_m) = Z(G_1) + Z(K_m) -1,$$ where $G_1 = G-(K_m \setminus \{v\})$.
\end{cor}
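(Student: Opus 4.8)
The plan is to reduce everything to maximum nullity via Theorem~\ref{thm:Z(G)=M(G)}, invoke the vertex-sum formula of Corollary~\ref{cor:v}, and then show that the first branch of the maximum is the dominant one.

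First, observe that $G = G_1 \oplus_v K_m$ is by hypothesis a clique tree, that $G_1$ (obtained from $G$ by deleting the pendant block except its cut-vertex $v$) is again a clique tree, and that $K_m$ is a single clique. Hence Theorem~\ref{thm:Z(G)=M(G)} gives $Z(G_1 \oplus_v K_m) = M(G_1 \oplus_v K_m)$ and $Z(G_1) = M(G_1)$, while trivially $Z(K_m) = M(K_m) = m-1$. So it suffices to prove the maximum-nullity identity $M(G_1 \oplus_v K_m) = M(G_1) + M(K_m) - 1$.

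Next, I would apply Corollary~\ref{cor:v} with $G_2 = K_m$:
\[
M(G_1 \oplus_v K_m) = \max\{M(G_1) + M(K_m),\, M(G_1 - v) + M(K_m - v)\} - 1.
\]
Since $K_m - v = K_{m-1}$, we have $M(K_m) = m-1$ and $M(K_m - v) = m-2$, so the two candidate values inside the maximum differ only through $M(G_1)$ versus $M(G_1 - v)$; concretely, the first branch dominates precisely when $M(G_1 - v) \le M(G_1) + 1$.

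The crux is therefore this single inequality, and I would obtain it without any new machinery by chaining the results already stated: the general bound $M \le Z$ from the introduction gives $M(G_1 - v) \le Z(G_1 - v)$; Lemma~\ref{lem:Z(G)-v} gives $Z(G_1 - v) \le Z(G_1) + 1$; and Theorem~\ref{thm:Z(G)=M(G)} gives $Z(G_1) = M(G_1)$. Together these yield $M(G_1 - v) \le M(G_1) + 1$, so the maximum equals $M(G_1) + M(K_m)$ and hence $M(G_1 \oplus_v K_m) = M(G_1) + M(K_m) - 1$. Translating back through $Z = M$ on $G_1 \oplus_v K_m$, on $G_1$, and on $K_m$ then gives $Z(G_1 \oplus_v K_m) = Z(G_1) + Z(K_m) - 1$, as claimed. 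The one point demanding care is exactly this key inequality: $G_1 - v$ need not be connected, so one cannot directly apply $Z = M$ to it; routing the bound through the universal inequality $M \le Z$ (rather than an equality) sidesteps this difficulty entirely.
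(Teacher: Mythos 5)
Your proof is correct and follows essentially the same route as the paper: apply Corollary~\ref{cor:v} together with Theorem~\ref{thm:Z(G)=M(G)}, compute $Z(K_m)=m-1$ and $Z(K_m-v)=m-2$, and use Lemma~\ref{lem:Z(G)-v} to show the first branch of the maximum dominates. Your extra step of bounding $M(G_1-v)$ via the universal inequality $M\le Z$ (because $G_1-v$ may be disconnected, so $Z=M$ cannot be invoked directly there) is a small refinement of a point the paper's proof passes over silently.
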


\begin{proof}
	Using Corollary~\ref{cor:v} and Theorem~\ref{thm:Z(G)=M(G)} we have $$Z(G_1 \oplus_v K_m) =  \max\{Z(G_1)+Z(K_m),Z(G_1-v)+Z(K_m-v)\} - 1.$$ We know that $Z(K_m) = m-1$ and hence $Z(K_m-v) = m-2$. Note that if $Z(G_1-v) = Z(G_1) +1$, then we have $Z(G_1-v)+Z(K_m-v) = Z(G_1)+1+m-2 = Z(G_1)+m-1$. Thus it follows from Lemma~\ref{lem:Z(G)-v} that $Z(G_1-v)+Z(K_m-v) \le Z(G_1)+Z(K_m)$ and hence the result follows.
\end{proof}

The next theorem gives us a formula for the zero forcing number of $G$ in terms of the zero forcing number of its blocks and the proof follows from Corollary~\ref{cor:v}, but we provide the proof for completeness.

\begin{theorem}\label{thm:Z(G)}
	Let $G$ be a clique tree with $b$ blocks $B_1,B_2,\cdots,B_b$, where each block have size at least $3$ then the zero forcing number of $G$ is given by $$Z(G) = \sum_{i=1}^b Z(B_i) - \sum_{v \in V(G)} (bi_G(v) - 1)$$
\end{theorem}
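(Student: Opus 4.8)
The plan is to induct on the number of blocks $b$. For the base case $b=1$, the graph $G$ is a single clique $B_1 = K_{n_1}$; here every vertex has block index $1$, so the correction term $\sum_{v}(bi_G(v)-1)$ vanishes and the asserted identity reduces to $Z(K_{n_1}) = n_1 - 1$, which is the known value of the zero forcing number of a complete graph.

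For the inductive step I would assume the formula for all clique trees with fewer than $b$ blocks and take $G$ with $b \geq 2$ blocks. Since $G$ is a clique tree with more than one block, it has at least one pendant block $K_m$, which by definition meets the rest of the graph in a single cut vertex $v$. Setting $G_1 = G - (K_m \setminus \{v\})$, we have $G = G_1 \oplus_v K_m$, where $G_1$ is a clique tree on $b-1$ blocks, each still of size at least $3$. Corollary~\ref{cor:Z(G)} then gives $Z(G) = Z(G_1) + Z(K_m) - 1$, and the induction hypothesis supplies the formula for $Z(G_1)$, so that $Z(G_1) = \sum_{i=1}^{b-1} Z(B_i) - \sum_{w \in V(G_1)}(bi_{G_1}(w)-1)$ after relabeling $K_m = B_b$.

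The remaining, and most delicate, step is the bookkeeping of block indices. I would record three facts: (i) every vertex $u \in V(K_m)\setminus\{v\}$ lies only in $K_m$, hence $bi_G(u) = 1$ and contributes nothing to the correction sum; (ii) for every $w \in V(G_1)$ with $w \neq v$ the blocks through $w$ are unchanged, so $bi_G(w) = bi_{G_1}(w)$; and (iii) the cut vertex gains exactly the block $K_m$, so $bi_G(v) = bi_{G_1}(v) + 1$. Substituting these into $\sum_{w \in V(G)}(bi_G(w)-1)$ and splitting the vertex set as $V(G) = V(G_1) \cup (V(K_m)\setminus\{v\})$, one finds that the correction sum for $G$ exceeds that for $G_1$ by exactly $1$, the entire increase being concentrated at $v$. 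Feeding this back, the extra $1$ cancels the $-1$ of Corollary~\ref{cor:Z(G)} while the term $Z(K_m) = Z(B_b)$ joins $\sum_{i=1}^{b-1}Z(B_i)$; collecting everything reproduces $Z(G) = \sum_{i=1}^{b} Z(B_i) - \sum_{w \in V(G)}(bi_G(w)-1)$, closing the induction.

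The only genuine obstacle is this index accounting: one must verify carefully that attaching a pendant block raises $\sum_{v}(bi_G(v)-1)$ by precisely one, so that it offsets the $-1$ coming from the cut-vertex reduction. Everything else is a direct substitution. As a sanity check, note that since $Z(K_{n_i}) = n_i - 1$ the right-hand side telescopes to $n - b$, where $n = |V(G)|$ and $b$ is the number of blocks, which one can confirm on small cases such as a single clique or two cliques joined at a vertex.
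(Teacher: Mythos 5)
Your proposal is correct and follows essentially the same route as the paper: induction on the number of blocks, detaching a pendant block $K_m$ at its cut vertex, applying Corollary~\ref{cor:Z(G)} to get $Z(G)=Z(G_1)+Z(K_m)-1$, and then tracking how the block indices change. Your bookkeeping step is in fact slightly more explicit than the paper's (you separately note that the new vertices of $K_m\setminus\{v\}$ contribute nothing and that the whole increase of the correction sum is concentrated at $v$), but the argument is the same.
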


\begin{proof}
	We prove the result by applying mathematical induction on the number of blocks \textit{i.e.} $b$. If $b =1$, then $G$ is a clique of size at least $3$ and all vertices have $bi(v) = 1$ and hence the result holds true. Next, we assume that the result is true for clique trees with $b-1$ blocks. Without loss of generality, we assume that $B_b$ is a pendent block and $G = H \oplus_w B_b$. Using Corollary~\ref{cor:Z(G)} we have $$Z(G) = Z(H) + Z(B_b) - 1.$$ Note that $H$ is a clique tree with $b-1$ blocks, where each block have size at least $3$. Thus using induction hypothesis we have that the result is true for $Z(H)$, \textit{i.e.} $$Z(H) = \sum_{i=1}^{b-1} Z(B_i) - \sum_{v \in V(H)} (bi_H(v) - 1).$$ It is easy to observe that 
	\begin{equation*}
		bi_H(v) = \left\{
		\begin{array}{ll}
			bi_G(v) & \text{for } v\ne w, \\
			bi_G(v) -1 & \text{for } v = w.\\
		\end{array}
		\right.
	\end{equation*} Thus, we have 
	\begin{align*}
		Z(G) &= Z(H) + Z(B_b) - 1\\
			 &= \sum_{i=1}^{b-1} Z(B_i) - \sum_{v \in V(H)} (bi_H(v) - 1) + Z(B_k) - 1\\
			 &= \sum_{i=1}^b Z(B_i) - \sum_{v \in V(G)} (bi_G(v) - 1).
	\end{align*} Hence the result follows for any clique tree where each block have size at least three by mathematical induction.
\end{proof}

\begin{rem}
	Let $G$ be a clique tree on $n$ vertices, zero forcing number $k$ and $b$ blocks, where each block has size at least $3$, then using Theorem~\ref{thm:Z(G)} we can conclude that $k = n - b$. Since each block has size at least $3$, we have $$\left \lfloor \frac{n}{2} \right \rfloor + 1 \le k \le n-1.$$ When $n$ is even, the clique tree which attains $k = \left \lfloor \frac{n}{2} \right \rfloor + 1$ has $\left \lfloor \frac{n}{2} \right \rfloor -2$ blocks of $K_3$ and one block of $K_4$ and when $n$ is odd, the clique tree has $\left \lfloor \frac{n}{2} \right \rfloor -1$ blocks of $K_3$.
\end{rem}

\begin{lem}\label{lem:Perron-vector}
	Let $A$ be the adjacency matrix of a clique tree $G$ with $(\rho(G),x)$ being the eigenpair corresponding to the Perron value. Let $m \ge 3$ and $K_m$ be a pendant block of $G$ with vertex set $\{v_1,\cdots,v_m\}$. If $v_1$ is the cut vertex, then $x_{v_1}> x_{v_i}$ for all $2 \le i \le m$ and $x_{v_i} = x_{v_j}$ for all $i,j \in \{2,3,\cdots, m\}$.
\end{lem}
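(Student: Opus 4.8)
The plan is to work entirely from the eigenvalue equation $\rho\, x_u = \sum_{w \sim u} x_w$ (where I write $\rho = \rho(G)$ for brevity) together with the entrywise positivity of the Perron vector guaranteed by Perron--Frobenius. The decisive structural fact is that for each $i \in \{2,\ldots,m\}$ the vertex $v_i$ is \emph{not} a cut vertex of $G$, so its entire neighborhood is $\{v_1,\ldots,v_m\}\setminus\{v_i\}$; the cut vertex $v_1$, by contrast, also has neighbors lying in the other blocks of $G$. These two cases drive the two conclusions of the lemma.

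First I would establish the equalities $x_{v_i}=x_{v_j}$ for $i,j\in\{2,\ldots,m\}$. Setting $S=\sum_{j=1}^m x_{v_j}$, the eigenvalue equation at a non-cut vertex $v_i$ (with $2\le i\le m$) reads $\rho\, x_{v_i}=S-x_{v_i}$, that is, $(\rho+1)\,x_{v_i}=S$. Since the right-hand side does not depend on $i$ and $\rho+1>0$, all of $x_{v_2},\ldots,x_{v_m}$ coincide; I denote their common value by $t$, and $t>0$ by positivity of the Perron vector.

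Next I would prove the strict inequality $x_{v_1}>t$. From $(\rho+1)t = S = x_{v_1}+(m-1)t$ I obtain the exact relation $x_{v_1}=(\rho-m+2)\,t$, so the claim is equivalent to $\rho>m-1$. To get this, I would use the eigenvalue equation at the cut vertex $v_1$: because $v_1$ is a cut vertex and $K_m$ is a pendant block, $G-v_1$ is disconnected and $v_1$ has at least one neighbor $w$ outside $K_m$, with $x_w>0$. Hence $\rho\, x_{v_1}=(m-1)t+\sum_{w} x_{w}>(m-1)t$, the sum ranging over the neighbors of $v_1$ outside $K_m$. Substituting $x_{v_1}=(\rho-m+2)t$ and dividing by $t>0$ gives $\rho^2-(m-2)\rho-(m-1)>0$, which factors as $(\rho-(m-1))(\rho+1)>0$; since $\rho+1>0$ this forces $\rho>m-1$, and therefore $x_{v_1}=(\rho-m+2)t>t=x_{v_i}$, as required.

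The only genuine obstacle is making the strict inequality rigorous, and it hinges entirely on the positive contribution to $v_1$ from outside the pendant block---equivalently, on the fact that $K_m$ is a proper induced subgraph of the connected graph $G$, so that $\rho(G)>\rho(K_m)=m-1$. Everything else is a direct manipulation of the eigenvalue equations. One could alternatively invoke the standard monotonicity of the spectral radius under passage to proper subgraphs of a connected graph (a consequence of Lemma~\ref{lem:sr_edge} and vertex deletion) to deduce $\rho>m-1$ at once, but the self-contained computation above avoids citing that principle and keeps the argument within the eigenvalue relations already in play.
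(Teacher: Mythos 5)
Your proof is correct and follows the same skeleton as the paper's: both reduce the lemma, via the eigenvalue equations restricted to the pendant block, to the identity $x_{v_1} = (\rho - m + 2)\,t$ and the inequality $\rho > m-1$. You differ in two sub-steps, each time in a more self-contained direction. For the equality $x_{v_i} = x_{v_j}$, the paper appeals to graph automorphisms permuting $v_2,\ldots,v_m$ (which forces equality of the corresponding Perron entries because the Perron eigenvector is unique up to scaling), whereas you read it off directly from $(\rho+1)x_{v_i} = S$ with $S$ independent of $i$; your version is more elementary and does not need to invoke simplicity of the Perron value. For $\rho > m-1$, the paper simply cites that $K_m$ is a proper subgraph of the connected graph $G$, whereas you derive the bound from the eigenvalue equation at $v_1$ using the strictly positive contribution of its neighbors outside $K_m$; your factorization $\rho^2-(m-2)\rho-(m-1) = (\rho-(m-1))(\rho+1)$ is correct, and the hypothesis that $v_1$ is a cut vertex does guarantee such a neighbor exists. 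Both routes are valid; yours trades brevity for keeping the whole argument inside the eigenvalue relations.
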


\begin{proof}
Using the adjacency relation $Ax = \rho(G) x$, we have 
	\begin{equation}
		\rho(G) x_{v_i} = x_{v_1} + \left(\sum_{j=2}^{m} x_{v_j}\right) - x_{v_i}
	\end{equation}
for $2 \le i \le m$. Since the block $K_m$ is a pendant block, due to graph automorphism we can relabel the vertices $\{v_2,v_3,\cdots,v_m\}$ in any order keeping the graph structure invariant. Hence the entries corresponding to these vertices in a Perron vector are same, \textit{i.e.} $x_{v_i} = x_{v_j}$ for all $i,j \in \{2,3,\cdots,m\}$. If we assume $x_{v_i} = y$ for all $2 \le i \le m$, then from the adjacency relation we have
$$x_{v_1} = (\rho(G) -(m-2))y.$$ Since, $K_m$ is a subgraph of $G$ we have $\rho(G) > (m-1)$, where $\rho(K_m) = m-1$. Thus, we can conclude that $x_{v_1} > x_{v_i}$ for all $2 \le i \le m$.
\end{proof}

Let $G$ be a clique tree with one block of $K_m$, $m \ge 3$ and all the remaining blocks are $K_3$, which are attached to more than one vertex of $K_m$. Let $A$ be the adjacency matrix of $G$ and $(\rho(G),x)$ be an eigenpair corresponding to the Perron eigenvalue $\rho(G)$. Let the vertices of $K_m$ be labeled as $\{v_1,v_2,\cdots,v_m\}$ such that $x_{v_1} \ge x_{v_i}$ for all $ 1 \le i \le m$.

\begin{lem}\label{lem:K_n-K_3}
	Let $G$ be a clique tree defined as above. If $G^{*}$ is a graph obtained from $G$ by moving all block of $K_3's$ attached to different vertices of $K_m$ to a single vertex, namely $v_1$, then $\rho(G) < \rho(G^*)$. 
\end{lem}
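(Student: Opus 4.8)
The plan is to bound $\rho(G^*)$ from below by the Rayleigh quotient of Equation~\eqref{eqn:eq_sp_rd} evaluated at the Perron vector of $G$. The first thing to note is that $G$ and $G^*$ share the same vertex set: relocating a $K_3$ block from a vertex $v_j$ to $v_1$ only changes the attachment of the two non-cut vertices $a,b$ of that block, replacing the edges $v_ja,\,v_jb$ by $v_1a,\,v_1b$ while leaving the internal edge $ab$ and every other edge untouched. Hence $A(G)$ and $A(G^*)$ differ only in a few off-diagonal entries, and I can feed one graph's Perron vector into the other.

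Let $x$ be the Perron vector of $G$, so that $A(G)x=\rho(G)x$ with $x>0$ entrywise. Using $x$ as a trial vector for $G^*$, the key computation is
\[
  x^{\top} A(G^*)x - x^{\top} A(G)x
  = 2\sum_{\text{moved blocks}} \big(x_{v_1}-x_{v_j}\big)\big(x_{a}+x_{b}\big),
\]
where the sum ranges over the relocated blocks and $a,b$ are the two non-cut vertices of each. By the chosen labelling $x_{v_1}\ge x_{v_i}$ for all $i$, together with positivity of $x$, every summand is nonnegative, so $x^{\top}A(G^*)x \ge x^{\top}A(G)x = \rho(G)\,x^{\top}x$. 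Equation~\eqref{eqn:eq_sp_rd} then gives $\rho(G^*) \ge x^{\top}A(G^*)x/(x^{\top}x) \ge \rho(G)$.

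The main obstacle is promoting this to a strict inequality, since a priori the summands above could all vanish (should several $K_m$-vertices happen to share the maximal Perron entry), and the variational bound only yields ``$\ge$''. I would resolve this by ruling out simultaneous equality: suppose $\rho(G^*)=\rho(G)$, forcing both inequalities to be equalities. Equality in the variational bound makes $x$ a positive multiple of the Perron vector of $G^*$, whence $A(G^*)x=\rho(G)x$. Evaluating this at $v_1$ and using $A(G)x=\rho(G)x$ gives
\[
  \rho(G)x_{v_1} = \big(A(G^*)x\big)_{v_1}
  = \big(A(G)x\big)_{v_1} + \sum_{\text{moved blocks}}\big(x_{a}+x_{b}\big)
  = \rho(G)x_{v_1} + \sum_{\text{moved blocks}}\big(x_{a}+x_{b}\big),
\]
since in $G^*$ the vertex $v_1$ keeps all of its $G$-neighbours and gains the non-cut vertices of each relocated block. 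The hypothesis that the $K_3$ blocks sit at more than one vertex of $K_m$ guarantees at least one genuine relocation, so the trailing sum is strictly positive by $x>0$, a contradiction. Therefore $\rho(G^*)>\rho(G)$. I expect the only delicate point to be the bookkeeping of exactly which edges change (so the displayed difference and the neighbour count at $v_1$ are correct); the rest is a clean equality-case argument.
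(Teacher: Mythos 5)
Your proof is correct, and the weak inequality $\rho(G^*)\ge\rho(G)$ is obtained exactly as in the paper: plug the Perron vector of $G$ into the Rayleigh quotient \eqref{eqn:eq_sp_rd} for $G^*$ and observe that the edge swap changes $x^{\top}Ax$ by a nonnegative amount because $x_{v_1}\ge x_{v_j}$. (The paper relocates the blocks one vertex at a time and iterates, while you do all relocations in one step; this is immaterial.) Where you genuinely diverge is the strictness. The paper disposes of equality by appealing to Lemma~\ref{lem:Perron-vector}, asserting tersely that a Perron vector of $G^*$ cannot simultaneously be a Perron vector of $G$; the reader is left to reconstruct why. You instead run an explicit equality-case analysis: equality in the variational bound forces $x$ to be a Perron vector of $G^*$ (this uses the uniqueness of the maximizer stated after \eqref{eqn:eq_sp_rd}, valid since $G^*$ is connected), and comparing $(A(G^*)x)_{v_1}$ with $(A(G)x)_{v_1}$ yields $\sum(x_a+x_b)=0$, contradicting positivity because at least one block is genuinely relocated. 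This is self-contained, does not need Lemma~\ref{lem:Perron-vector} at all, and is arguably more rigorous than the paper's one-line justification; the paper's route has the mild advantage of reusing an already-proved structural fact about Perron entries on pendant blocks. Your edge bookkeeping (only $v_ja,v_jb$ are replaced by $v_1a,v_1b$, the internal edge $ab$ and all of $v_1$'s original neighbours are retained) is accurate, so the displayed identities hold as written.
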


\begin{proof}
	Let the vertices adjacent to $v_i$ for $i \ne 1$, which are not a part of $K_n$ be $\{u_1,u_2,\cdots,u_s\}$. Let $G'$ be the graph obtained from $G$ by performing the following graph transformations:
	\begin{itemize}
		\item[(a)] Delete the edges $v_i \sim u_j$ for all $ 1 \le j \le s$
		\item[(b)] Add the edges $v_1 \sim u_j$ for all $ 1 \le j \le s$.
	\end{itemize} Let $A'$ be the adjacency matrix of $G'$. Then, using the condition $x_{v_1} \ge x_{v_i}$, we have
	$$\frac{1}{2} x^T(A'-A)x = \sum_{j=1}^s x_{v_1}x_{u_j} -  \sum_{j=1}^s x_{v_i}x_{u_j} = \sum_{j=1}^s (x_{v_1}-x_{v_i})x_{u_j} \ge 0.$$
	Thus, we have $\rho(G) \le \rho(G')$. Applying the similar graph transformation repetitively we obtain the graph $G^*$ and using the same argument we have $\rho(G) \le \rho(G^*)$. Further, using Lemma~\ref{lem:Perron-vector}, a Perron vector of $G^*$ cannot be a Perron vector for $G$ and hence, we have $\rho(G) < \rho(G^*)$. 
\end{proof}

\section{Main Results}\label{sec:main}
Let $l,m \ge 4$ and $G$ be a clique tree that contains $H = K_l \oplus_v K_m$ as a subgraph. Let $A(G)$ be the adjacency matrix of $G$ and $(\rho(G),\textbf{x})$ be the eigen pair corresponding to the spectral radius $\rho(G)$. Let $p,q \in H \cap K_l \setminus \{v\}$ be two vertices such that $x_p,x_q$ is the least among all the vertices of $H \cap K_l \setminus \{v\}$. Without loss of generality we assume that $x_p \le x_q$. In this setup we have the following two lemmas.

\begin{lem}\label{lem:1}
	Let $r,s \in H \cap K_m \setminus \{v\}$  be two vertices such that $x_r,x_s \ge x_p,x_q$. If $G^*$ be the graph obtained from $G$ by the following operations:
	\begin{itemize}
		\item Delete the edges $u \sim p$ and $u \sim q$ for all $u \in K_l \setminus \{p,q,v\}$,
		\item Add the edges $u \sim w$ for all $u \in K_l \setminus \{p,q,v\}$ and $w \in K_m \setminus \{v\}$,
	\end{itemize} then $\rho(G^*) > \rho(G)$. Moreover $Z(G) = Z(G^*)$.
\end{lem}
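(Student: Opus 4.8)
The plan is to handle the two assertions separately: the spectral inequality through a Rayleigh-quotient comparison using the Perron vector of $G$, and the invariance of the zero forcing number through the block-counting formula $k=n-b$ noted in the Remark following Theorem~\ref{thm:Z(G)}. Before either, I would pin down the structural effect of the transformation. Writing $K_l\setminus\{p,q,v\}=\{u_1,\dots,u_{l-3}\}$, deleting the edges $u_i\sim p$, $u_i\sim q$ and adding all edges $u_i\sim w$ with $w\in K_m\setminus\{v\}$ merges $\{v,u_1,\dots,u_{l-3}\}$ together with $K_m\setminus\{v\}$ into one clique on $(l-3)+1+(m-1)=l+m-3$ vertices, while the triangle $\{v,p,q\}$ survives, attached at $v$. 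Thus $H=K_l\oplus_v K_m$ is replaced by $H^{\ast}=K_{l+m-3}\oplus_v K_3$, and since the transformation changes no edge outside $H$ and leaves untouched every cut vertex through which the remaining blocks of $G$ hang, $G^{\ast}$ is again a clique tree.

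For $\rho(G^{\ast})>\rho(G)$, let $\mathbf{x}$ be the Perron vector of $G$, which is entrywise positive by Perron--Frobenius. Using $\mathbf{x}^{\top}A\mathbf{x}=2\sum_{u\sim w}x_u x_w$, only the deleted and added edges contribute to the difference, so
\[
\tfrac12\,\mathbf{x}^{\top}\big(A(G^{\ast})-A(G)\big)\mathbf{x}
=\sum_{i=1}^{l-3} x_{u_i}\!\left[\Big(\sum_{w\in K_m\setminus\{v\}} x_w\Big)-x_p-x_q\right].
\]
The bracketed term is strictly positive: the hypothesis $x_r,x_s\ge x_p,x_q$ gives $x_r+x_s\ge x_p+x_q$, and since $m\ge 4$ there are at least $m-3\ge 1$ further strictly positive summands in $\sum_{w\in K_m\setminus\{v\}} x_w$. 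As $l\ge 4$ guarantees at least one $u_i$ and every $x_{u_i}>0$, the whole expression is positive. Hence $\mathbf{x}^{\top}A(G^{\ast})\mathbf{x}>\mathbf{x}^{\top}A(G)\mathbf{x}=\rho(G)\,\mathbf{x}^{\top}\mathbf{x}$, and the Min-max characterization of Eqn.~\eqref{eqn:eq_sp_rd} gives $\rho(G^{\ast})\ge \mathbf{x}^{\top}A(G^{\ast})\mathbf{x}/\mathbf{x}^{\top}\mathbf{x}>\rho(G)$. Note that the strict positivity of the quadratic form yields the strict inequality directly, so no auxiliary Perron-vector argument (as was needed in Lemma~\ref{lem:K_n-K_3}) is required here.

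For $Z(G)=Z(G^{\ast})$, I would apply $k=n-b$. The transformation preserves the vertex set, so $G$ and $G^{\ast}$ share the same $n$. It also replaces the two blocks $K_l,K_m$ by the two blocks $K_{l+m-3},K_3$ while leaving every other block of $G$ unchanged, so the number of blocks $b$ is unchanged; moreover every block of $G^{\ast}$ still has size at least $3$ (indeed $l+m-3\ge 5$). Therefore, since each block has size at least three, $Z(G^{\ast})=n-b=Z(G)$.

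The step I expect to be the main obstacle is not the spectral estimate, which is a direct sign computation, but the structural bookkeeping underlying the zero forcing claim: one must verify carefully that the transformation neither creates a cycle in the block--cut tree nor merges or splits any block of $G$ lying outside $H$ (for example blocks hanging from $p$, $q$, the $u_i$, or the vertices of $K_m$), so that $G^{\ast}$ genuinely remains a clique tree with exactly the same block count and all blocks of size at least $3$. Once this structural fact is secured, the equality of zero forcing numbers is immediate from $k=n-b$.
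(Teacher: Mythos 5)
Your proposal is correct and follows essentially the same route as the paper: the spectral inequality is obtained from the identical Rayleigh-quotient computation $\tfrac12\mathbf{x}^{\top}(A(G^{\ast})-A(G))\mathbf{x}=\sum_{u\in K_l\setminus\{p,q,v\}}x_u\bigl(\sum_{w\in K_m\setminus\{v\}}x_w-x_p-x_q\bigr)$, with strictness coming from the $m-3\ge 1$ extra positive entries of $K_m\setminus\{v,r,s\}$, exactly as in the paper. For the zero forcing claim you invoke the consequence $Z=n-b$ of Theorem~\ref{thm:Z(G)} (as recorded in the paper's remark) rather than recomputing $Z(K_l\oplus_v K_m)=Z(K_3\oplus_v K_{l+m-3})$ via Corollary~\ref{cor:Z(G)}, which is an equivalent and slightly cleaner bookkeeping of the same fact.
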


\begin{proof}
	Let $A(G^*)$ be the adjacency matrix of $G^*$. Since $l,m \ge 4$, there exists at least one vertex in $K_l$ other than $p,q,v$ and the same holds true for $K_m$. Using the fact that $x_r,x_s \ge x_p,x_q$ we have
	\begin{align*}
		\frac{1}{2} \textbf{x}^T(A(G^*)-A(G))\textbf{x} &= - \sum_{u \in K_l \setminus \{p,q,v\}} x_u x_p - \sum_{u \in K_l \setminus \{p,q,v\}} x_u x_q + \sum_{u \in K_l \setminus \{p,q,v\}} \sum_{w \in K_m \setminus \{v\}} x_ux_w\\
		&> - \sum_{u \in K_l \setminus \{p,q,v\}} x_u x_p - \sum_{u \in K_l \setminus \{p,q,v\}} x_u x_q + \sum_{u \in K_l \setminus \{p,q,v\}} x_u x_r \\
		&~~~~+ \sum_{u \in K_l \setminus \{p,q,v\}} x_u x_s\\
		&= \sum_{u \in K_l \setminus \{p,q,v\}} x_u(x_r-x_p) + \sum_{u \in K_l \setminus \{p,q,v\}} x_u (x_s-x_q) \ge 0.
	\end{align*} Thus we have $\rho(G^*) > \rho(G)$. In the resulting graph $G^*$, the subgraph $H = K_l \oplus_v K_m$ has been replaced by $K_3 \oplus_v K_{l+m-3}$ and it using Corollary~\ref{cor:Z(G)} we have 
	\begin{align*}
		Z(K_l \oplus_v K_m) &= Z(K_l) + Z(K_m) -1 = l+m-3,\\
		Z(K_3 \oplus_v K_{l+m-3}) &= Z(K_3) + Z(K_{l+m-3}) = l+m-3.
	\end{align*} One can observe that $bi_G(u) = bi_{G^*}(u)$ for all vertices $u$. Thus using Theorem~\ref{thm:Z(G)} and the fact that $Z(K_l \oplus_v K_m) = Z(K_3 \oplus_v K_{l+m-3})$ we have $Z(G) = Z(G^*)$.
\end{proof}

\begin{lem}\label{lem:2}
	Let $r \in H \cap K_m \setminus \{v\}$  be a vertex such that $x_r \ge x_p,x_q$ and for all vertices $u \in  H \cap K_m \setminus \{v,r\}$ we have $x_u < x_q$. Let $s \in  H \cap K_m \setminus \{v,r\}$ be such that $x_s < x_q$. If $G^*$ be the graph obtained from $G$ by the following operations:
	\begin{itemize}
		\item Delete the edges $u \sim p$ for all $u \in K_l \setminus \{p,v\}$,
		\item Delete the edges $u \sim s$ for all $u \in K_m \setminus \{s,v\}$,
		\item Add the edge $p \sim s$,
		\item Add the edges $u \sim w$ for all $u \in K_l \setminus \{p,v\}$ and $w \in K_m \setminus \{s,v\}$,
	\end{itemize} then $\rho(G^*) > \rho(G)$. Moreover $Z(G) = Z(G^*)$.
\end{lem}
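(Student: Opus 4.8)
The plan is to mimic the structure of the proof of Lemma~\ref{lem:1}, splitting the argument into a spectral-radius comparison (via the Rayleigh quotient) and a zero-forcing invariance. The key difference is that here the transformation is more surgical: instead of moving an entire pair of low-valued vertices, I move one vertex $p$ out of $K_l$ and one vertex $s$ out of $K_m$, connect them, and complete the remaining vertices of $K_l\setminus\{p,v\}$ to $K_m\setminus\{s,v\}$. So the resulting graph replaces $H=K_l\oplus_v K_m$ by something like $K_3\oplus_v K_{l+m-3}$ again (with $p,s$ forming a new small block through $v$), preserving block sizes and hence the zero forcing count.

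\textbf{Spectral radius.} First I would compute $\tfrac{1}{2}\textbf{x}^T(A(G^*)-A(G))\textbf{x}$ by accounting for each edge added or deleted. The deleted edges contribute $-\sum_{u\in K_l\setminus\{p,v\}}x_ux_p$ and $-\sum_{u\in K_m\setminus\{s,v\}}x_ux_s$; the added edges contribute $x_px_s$ together with the full bipartite sum $\sum_{u\in K_l\setminus\{p,v\}}\sum_{w\in K_m\setminus\{s,v\}}x_ux_w$. I would then try to bound the bipartite sum below: since each $u\in K_l\setminus\{p,v\}$ is multiplied by $\sum_{w\in K_m\setminus\{s,v\}}x_w$, and this latter sum should dominate $x_p$ (indeed $x_r\ge x_p$ is one of its summands, and all terms are positive), the positive contributions should outweigh the deletions. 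The hypotheses $x_r\ge x_p,x_q$ and $x_s<x_q$ are exactly what make the signs work out: $s$ carries a small weight so deleting its edges costs little, while $r$ carries a large weight so attaching the $K_l$-vertices to $K_m$ is profitable. Concretely I expect to show the quantity is $>0$ by pairing $x_r$ against $x_p$ in the bipartite sum and discarding the remaining nonnegative terms, yielding $\rho(G^*)>\rho(G)$ as in Lemma~\ref{lem:1}.

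\textbf{Zero forcing.} For the equality $Z(G)=Z(G^*)$, I would verify that the multiset of block sizes and the block-index function $bi(\cdot)$ are unchanged, then invoke Theorem~\ref{thm:Z(G)}. The subgraph $H=K_l\oplus_v K_m$ is transformed into $K_3\oplus_v K_{l+m-3}$ (the new triangle being on $\{v,p,s\}$ and the large clique on $v$ together with $K_l\setminus\{p,v\}$ and $K_m\setminus\{s,v\}$), so by Corollary~\ref{cor:Z(G)} both have zero forcing number $l+m-3$. Checking $bi_G(u)=bi_{G^*}(u)$ for every vertex—noting that $p$ and $s$ leave their old blocks but join a new common block through $v$, while $v$ retains the same number of incident blocks—gives the invariance of the second sum in Theorem~\ref{thm:Z(G)}, and the conclusion follows.

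\textbf{The main obstacle} I anticipate is the sign analysis in the bipartite sum: unlike Lemma~\ref{lem:1}, the new edge $p\sim s$ and the asymmetric roles of $p$ (leaving $K_l$) and $s$ (leaving $K_m$) mean I must be careful that the $x_px_s$ term and the bipartite block together compensate \emph{both} deletion sums simultaneously, rather than matching each deletion to a single added edge. The delicate point is confirming that $\sum_{w\in K_m\setminus\{s,v\}}x_w\ge x_p$ holds robustly from the single comparison $x_r\ge x_p$; I would lean on positivity of the Perron vector (every omitted summand is strictly positive) and on the block structure guaranteed by $l,m\ge4$, which ensures $K_m\setminus\{s,v\}$ is nonempty and contains $r$.
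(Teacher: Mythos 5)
Your decomposition of $\tfrac12\mathbf{x}^{\top}(A(G^*)-A(G))\mathbf{x}$ into the two deletion sums, the term $x_px_s$, and the bipartite sum is exactly the paper's starting point, and your zero-forcing argument (the blocks $K_l,K_m$ are replaced by a $K_3$ on $\{v,p,s\}$ and a $K_{l+m-3}$, with $bi(\cdot)$ unchanged, so Theorem~\ref{thm:Z(G)} and Corollary~\ref{cor:Z(G)} apply) coincides with the paper's and is fine. The gap is in the sign analysis, precisely at the point you flag as the main obstacle and then leave unresolved. Pairing $x_r$ against $x_p$ bounds the bipartite sum below by $\sum_{u\in K_l\setminus\{p,v\}}x_ux_r$ and so compensates the deletion sum $-\sum_{u\in K_l\setminus\{p,v\}}x_ux_p$; but after ``discarding the remaining nonnegative terms'' you are still left with $x_px_s-\sum_{w\in K_m\setminus\{s,v\}}x_wx_s$, which is strictly \emph{negative} (that sum already contains $x_rx_s\ge x_px_s$ plus further positive terms), so it cannot be discarded. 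Symptomatically, your sketch never actually uses the hypothesis $x_s<x_q$ in the spectral computation, yet that hypothesis must enter somewhere.

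The paper closes this step by extracting from the bipartite sum \emph{both} the column $w=r$ and the row $u=q$, i.e.
$\sum_{u}\sum_{w}x_ux_w > \sum_{u\in K_l\setminus\{p,v\}}x_ux_r+\sum_{w\in K_m\setminus\{s,v\}}x_qx_w-x_qx_r$
(the term $x_qx_r$ being subtracted because it is counted twice), and then regrouping the whole expression as
\[
\sum_{u\in K_l\setminus\{p,q,v\}}x_u(x_r-x_p)+\sum_{w\in K_m\setminus\{r,s,v\}}x_w(x_q-x_s)+(x_r-x_p)(x_q-x_s)\ \ge\ 0,
\]
where the final product absorbs the deficit $x_px_s-x_qx_r$ and is nonnegative exactly because $x_r\ge x_p$ \emph{and} $x_q>x_s$. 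Alternatively, your own route can be completed more cleanly than the paper's: writing $L=\sum_{u\in K_l\setminus\{p,v\}}x_u$ and $M=\sum_{w\in K_m\setminus\{s,v\}}x_w$, the quantity in question factors as $(L-x_s)(M-x_p)$, and $L\ge x_q+(\text{positive terms})>x_s$ while $M\ge x_r+(\text{positive terms})>x_p$ since $l,m\ge 4$ and the Perron vector is entrywise positive. Either way, compensating the second deletion sum is the essential missing step, not a routine verification.
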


\begin{proof}
	Let $A(G^*)$ be the adjacency matrix of $G^*$. Since $l,m \ge 4$, there exists at least one vertex in $K_l$ and in $K_m$ other than $p,q,v$ and $r,s,v$, respectively. Using the fact that $x_r \ge x_p,x_q$ and $x_p > x_s$ for all $s \in  H \cap K_m \setminus \{v,r\}$ we have
	\begin{align*}
		\frac{1}{2} \textbf{x}^T(A(G^*)-A(G))\textbf{x} 
		&= - \sum_{u \in K_l \setminus \{p,v\}} x_u x_p - \sum_{u \in K_m \setminus \{s,v\}} x_u x_s + x_p x_s + \sum_{u \in K_l \setminus \{p,v\}} \sum_{w \in K_m \setminus \{s,v\}} x_u x_w\\
		&> - \sum_{u \in K_l \setminus \{p,v\}} x_u x_p - \sum_{u \in K_l \setminus \{s,v\}} x_u x_s + \sum_{u \in K_l \setminus \{p,v\}} x_u x_r + \sum_{u \in K_l \setminus \{s,v\}} x_u x_q\\
		&\qquad +x_px_s-x_qx_r\\
		&= \sum_{u \in K_l \setminus \{p,v\}} x_u(x_r-x_p) + \sum_{u \in K_l \setminus \{s,v\}} x_u (x_q-x_s) +x_px_s-x_qx_r\\
		&= \sum_{u \in K_l \setminus \{p,v\}} x_u(x_r-x_p) + \sum_{u \in K_l \setminus \{r,s,v\}} x_u (x_q-x_s) + x_r(x_q-x_s)\\
		&\qquad +x_px_s-x_qx_r\\
		&= \sum_{u \in K_l \setminus \{p,v\}} x_u(x_r-x_p) + \sum_{u \in K_l \setminus \{r,s,v\}} x_u (x_q-x_s) -(x_r-x_p)x_s\\
		&= \sum_{u \in K_l \setminus \{p,q,v\}} x_u(x_r-x_p) + \sum_{u \in K_l \setminus \{r,s,v\}} x_u (x_q-x_s) +(x_r-x_p)(x_q - x_s)\\
		&\ge 0.
	\end{align*}
	Thus, we have $\rho(G^*) > \rho(G)$. In the resulting graph $G^*$, the subgraph $H = K_l \oplus_v K_m$ has been replaced by $K_3 \oplus_v K_{l+m-3}$ and by similar argument as in Lemma~\ref{lem:1} we have $Z(G) = Z(G^*)$.
\end{proof}

Let $G(n,k)$ be the class of clique tree on $n$ vertices and zero forcing number $k$ with size of each block is at least $3$. Let $\mathcal{G}(n,k) \in G(n,k)$ be the graph with $n-k-1$ blocks of $K_3$ and a single block of $K_{2k-n+2}$ all attached to a single cut-vertex, \textit{i.e.} $$\mathcal{G}(n,k) = K_{2k-n+2} \circledcirc K_3^{(n-k-1)}.$$ For  reference one can see Figure~\ref{fig:extremal}. Note that, it is easy to observe that $Z(\mathcal{G}(n,k)) = k$.

\begin{figure}[ht]
	\centering
	\begin{tikzpicture}
		\draw[fill=none,thick](0,0) circle (2.0) node [black,yshift=0.0cm] {$K_{2k-n+2}$};
		\Vertex[x=0,y=-2,color=red,size=.2]{0}
		\Vertex[x=-1,y=-4,color=black,size=.2]{1}
		\Vertex[x=-2,y=-3,color=black,size=.2]{2}
		
		\Vertex[x=1,y=-4,color=black,size=.2]{3}
		\Vertex[x=2,y=-3,color=black,size=.2]{4}
		
		%\Vertex[x=-3,y=-3,color=black,size=.2]{5}
		
		\Edge(0)(1)
		\Edge(0)(2)
		\Edge(0)(3)
		\Edge(0)(4)
		\Edge(2)(1)
		\Edge(3)(4)
		
		\node at (0,-5) {$n-k-1$};
		
		\draw [dots]  (-1,-4) -- (1,-4);
		
		\draw [decorate, 
		decoration = {brace, raise=5pt,	amplitude=10pt}] (2,-4) -- (-2,-4);
	\end{tikzpicture}
	\caption{$\mathcal{G}(n,k) = K_{2k-n+2} \circledcirc K_3^{(n-k-1)}$.} \label{fig:extremal}
\end{figure}

\begin{theorem}
	Let $G \in G(n,k)$ be a clique tree on $n$ vertices and zero forcing number $k$, then $$\rho(G) \le \rho(\mathcal{G}(n,k))$$ and the equality is attained if and only if $G \cong \mathcal{G}(n,k)$.
\end{theorem}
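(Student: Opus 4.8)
The plan is to take a graph $G^{\ast} \in G(n,k)$ of maximum spectral radius (one exists since $G(n,k)$ is finite) and to show $G^{\ast} \cong \mathcal{G}(n,k)$. Every transformation I use keeps the number of vertices equal to $n$, keeps the number of blocks equal to $b = n-k$ (recall from the Remark after Theorem~\ref{thm:Z(G)} that $k = n-b$), and keeps each block of size at least $3$; hence it keeps the graph inside $G(n,k)$, i.e. it preserves the zero forcing number. Since each transformation strictly increases $\rho$, the maximizer $G^{\ast}$ must admit none of them, and I will read off its structure from this rigidity. As any maximizer turns out to be isomorphic to $\mathcal{G}(n,k)$, the maximum is attained only at $\mathcal{G}(n,k)$, which yields both the inequality and the equality characterization at once.

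First I would reduce $G^{\ast}$ to a \emph{star of cliques}, in which all blocks share one common cut vertex $c$. The tool is a block-shifting lemma generalizing Lemma~\ref{lem:K_n-K_3}: if $B$ is a pendant block with cut vertex $a$, and $b \ne a$ is any vertex with $x_b \ge x_a$ (here $\mathbf{x}$ is the Perron vector), then detaching $B$ from $a$ and reattaching it at $b$ strictly increases $\rho$. Writing $W = V(B)\setminus\{a\}$, which is disjoint from $b$ and non-adjacent to $b$ in a clique tree, the only change is deleting the edges $a\sim w$ and adding $b \sim w$ for $w \in W$, so $\tfrac12\mathbf{x}^{\top}(A'-A)\mathbf{x} = (x_b-x_a)\sum_{w\in W}x_w \ge 0$. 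If $x_b > x_a$ this is strict; if $x_b = x_a$, equality would force $\mathbf{x}$ to be a Perron vector of the new graph, yet row $b$ of $A'\mathbf{x}$ exceeds $\rho(G^{\ast})x_b$ by $\sum_{w\in W}x_w>0$, a contradiction. Applying this to $G^{\ast}$: every pendant block must sit at a cut vertex $a$ for which no other vertex has Perron value $\ge x_a$, i.e. $a$ is a strict global maximum of $\mathbf{x}$; such a vertex is unique, so all pendant blocks share it. A short argument on the block-cut tree then shows $c$ is the only cut vertex (a block with a second cut vertex would, following the tree away from $c$, yield a pendant block attached elsewhere), so every block contains $c$ and $G^{\ast}$ is a star of cliques at $c$.

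Next I would show $G^{\ast}$ has at most one block of size $\ge 4$. In the star every block is pendant at $c$, so by Lemma~\ref{lem:Perron-vector} all interior vertices of a given block carry a common Perron value. If two blocks $K_l,K_m$ have size $\ge 4$, relabel so the common interior value of $K_m$ is at least that of $K_l$; then the hypotheses of Lemma~\ref{lem:1} hold with $v=c$ (any two interior vertices of $K_m$ serve as $r,s$, dominating the interior value $x_p=x_q$ of $K_l$), and that lemma replaces $K_l\oplus_c K_m$ by $K_3\oplus_c K_{l+m-3}$, strictly increasing $\rho$ while preserving the star shape, the block count, and the zero forcing number. (Lemma~\ref{lem:2} is available but not needed on this route.) This strictly decreases the number of blocks of size $\ge 4$, so $G^{\ast}$ can have at most one. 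Finally, a star of $b=n-k$ cliques through $c$, one of size $M$ and the remaining $n-k-1$ equal to $K_3$, has $M+2(n-k-1)=n$ vertices, forcing $M=2k-n+2$; hence $G^{\ast}=K_{2k-n+2}\circledcirc K_3^{(n-k-1)}=\mathcal{G}(n,k)$.

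The main obstacle is the shifting lemma of Step~A in the borderline case $x_b=x_a$: the Rayleigh-quotient computation alone yields only $\rho(G')\ge\rho(G^{\ast})$, and the strict inequality must be extracted from the eigenvalue equation at the newly-enriched vertex $b$, exactly as the final sentence of Lemma~\ref{lem:K_n-K_3} extracts strictness from Lemma~\ref{lem:Perron-vector}. The accompanying combinatorial claim—that pendant blocks attached at a strict maximum of the Perron vector force the entire block-cut tree to be a star—also needs care. Once the star structure is established, the merging in Step~B and the final vertex count are routine given Lemmas~\ref{lem:Perron-vector} and~\ref{lem:1}.
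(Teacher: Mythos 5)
Your proof is correct, but it is organized differently from the paper's. The paper first uses Lemmas~\ref{lem:1} and~\ref{lem:2} to forbid two adjacent blocks of size at least $4$, then argues by an ad hoc case analysis (two specific forbidden subgraphs) that the maximizer has only one cut vertex, and finally invokes Lemma~\ref{lem:K_n-K_3} to gather the triangles at one vertex. You instead prove a single block-shifting lemma --- a pendant block can always be re-rooted at any vertex whose Perron entry is at least that of its current cut vertex, with a strict increase in $\rho$ --- and use it to show at the outset that every pendant block of the maximizer sits at the unique strict Perron maximum, whence the block-cut tree is a star; only then do you merge the large blocks with Lemma~\ref{lem:1}. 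This buys you three things: Lemma~\ref{lem:2} becomes unnecessary (in the star all interior Perron entries of a pendant block coincide by Lemma~\ref{lem:Perron-vector}, so the hypotheses of Lemma~\ref{lem:1} are automatically satisfiable with $r,s$ interior to the larger-valued block), Lemma~\ref{lem:K_n-K_3} is subsumed by the shifting lemma, and the paper's somewhat delicate ``more than one cut vertex forces one of two subgraphs'' step is replaced by a clean statement about leaves of the block-cut tree. Two small points you should write out explicitly: (i) a vertex $b\neq a$ with $x_b\ge x_a$ cannot lie in the pendant block $B$ itself, since Lemma~\ref{lem:Perron-vector} gives $x_w<x_a$ for all $w\in V(B)\setminus\{a\}$ --- this is what justifies your assertion that $W$ is disjoint from and non-adjacent to $b$; and (ii) the shift neither merges blocks nor changes any block's size, so the block count $n-k$ and hence, by Theorem~\ref{thm:Z(G)}, the zero forcing number are preserved. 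Your treatment of the borderline case $x_b=x_a$ via the eigenvalue equation at $b$ is exactly the strictness mechanism the paper uses at the end of Lemma~\ref{lem:K_n-K_3}.
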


\begin{proof}
	Let $G \in G(n,k)$ be a clique tree on $n$ vertices and zero forcing number $k$ with maximal spectral radius. Then, using Lemmas~\ref{lem:1} and \ref{lem:2}, we can conclude that $G$ cannot have two blocks that have size at least $4$ that shares a common cut-vertex. Further, we claim that $G$ can have at most one cut-vertex. Suppose, for a contradiction, we assume that $G$ has more than one cut vertex. Then we have one of the following as a subgraph of $G$:
	\begin{itemize}
		\item[1.] A $K_3$ along with $K_m,K_l$ attached to two different vertices of $K_3$, where $l,m \ge 3$.
		\item[2.] A $K_m$ with $m \ge 4$ and $2$ $K_3's$ attached to two different vertices of $K_m$.
	\end{itemize}
	Let us consider the first case, where $u,v$ be two cut-vertices of $K_3$ with $K_m,K_l$  attached to $u,v$ respectively. Let $(\rho(G),\textbf{x})$ be the eigen pair corresponding to the spectral radius $\rho(G)$. Without loss of generality we can assume that $x_u \ge x_v$, then $G^*$ be the graph obtained by applying the graph transformation where we move the block $K_l$ from $v$ to $u$. Then we have $\rho(G^*) \ge \rho(G)$. If both $m$ and $n$ are $\ge 4$, then again using Lemmas~\ref{lem:1} and \ref{lem:2} we can further increase the spectral radius, thereby reaching a contradiction.
	
	Thus, the extremal graph in $G(n,k)$ can have at most one block of $K_l$, where $l \ge 4$ and the remaining blocks are $K_3$'s. Then using Lemma~\ref{lem:K_n-K_3} we can conclude that the extremal graph must have all the $K_3$'s  attached to a single vertex of the $K_l$. Hence we have $G \cong \mathcal{G}(n,k)$ and the result follows.
\end{proof}

The adjacency matrix of $\mathcal{G}(n,k)$ can be expressed in the following block form
\begin{equation}
	A(\mathcal{G}(n,k)) = \begin{bmatrix}
		(J-I)_{2k-n+1} & \mathds{1}_{2k-n+1} &\textbf{0}_{(2k-n+1)\times(2(n-k-1))}\\
		\mathds{1}_{2k-n+1}^T & 0 & \mathds{1}_{2(n-k-1)}^T\\
		\textbf{0}_{(2k-n+1)\times(2(n-k-1))}^T & \mathds{1}_{2(n-k-1)} & D
	\end{bmatrix},
\end{equation}
where $D$ is a block diagonal matrix of order $2(n-k-1)$ with $(J-I)_2$ as the $2 \times 2$ diagonal blocks. Observe that, each block of the adjacency matrix of $\mathcal{G}(n,k)$ has constant row sum, hence the equitable quotient matrix of $A(\mathcal{G}(n,k))$ is given by
\begin{equation*}
	Q(\mathcal{G}(n,k)) = \begin{bmatrix}
		2k-n & 1 & 0\\
		2k-n+1 & 0 & 2(n-k-1)\\
		0 & 1 & 1
	\end{bmatrix}.
\end{equation*}
The characteristics polynomial of $Q(\mathcal{G}(n,k))$ is given by
\begin{equation}\label{eqn:f(x)}
	f(x) = \det(xI -Q(\mathcal{G}(n,k))) = x^3 +(n-2k-1)x^2-(2n-2k-1)x+k-(n-k-1)(2n-2k+1).
\end{equation}
The vertex set of $\mathcal{G}(n,k)$ can be partitioned as follows
$$V(\mathcal{G}(n,k)) = \{v\} \cup \{v_1,v_2,\cdots, v_{2k-n+1}\} \cup \{u_1^{(1)},u_2^{(1)}\} \cup \cdots \cup \{u_1^{(n-k-1)},u_2^{(n-k-1)}\},$$ where $v$ is the central cut vertex, the vertex set of $K_{n-k+2}$ is $ \{v\} \cup \{v_1,v_2,\cdots, v_{2k-n+1}\}$ and $\{v\} \cup  \{u_1^{(i)},u_2^{(i)}\}$ is the vertex set of the $i^{th}$ $K_3$ for $1 \le i \le n-k-1$. Next, let us consider the following sets of vectors
$$E_1 = \{e(v_1) - e(v_j) : 2 \le j \le 2k-n+1\} \cup \{e(u_1^{(1)}) - e(u_2^{(1)})\} \cup \cdots \cup  \{e(u_1^{(n-k-1)}) - e(u_2^{(n-k-1)})\}$$ and
$$E_2 = \{e(u_1^{(1)}) + e(u_2^{(1)}) - e(u_1^{(i)}) - e(u_2^{(i)}) : 2 \le i \le n-k-1 \},$$ where $e(u)$ is the column vector with $1$ at the $u^{th}$ coordinate and $0$ at all other remaining coordinates. Now, it is easy to observe that $$A(\mathcal{G}(n,k)) x = -x \text{ for all } x \in E_1$$ and $$A(\mathcal{G}(n,k)) x = x \text{ for all } x \in E_2.$$ Thus, we can conclude that $-1$ is an eigenvalue of $A(\mathcal{G}(n,k))$ with multiplicity $(2k-n)+(n-k-1) = k-1$ and $1$ is an eigenvalue of $A(\mathcal{G}(n,k))$ with multiplicity $n-k-2$. One can observe that $1$ and $-1$ are not roots of the equation $f(x) = 0$, where $f(x)$ is defined in Eqn.~\eqref{eqn:f(x)}. Combining above observations leads us the following theorem.

\begin{theorem}
	The characteristics polynomial of the adjacency matrix of $\mathcal{G}(n,k)$ is given by
	$$g(x) = \det(xI-A(\mathcal{G}(n,k))) = (x-1)^{n-k-2} (x+1)^{k-1} f(x),$$ where $f(x)$ is the polynomial as defined in Eqn.~\eqref{eqn:f(x)}.
\end{theorem}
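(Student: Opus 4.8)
The plan is to read the entire eigenstructure of $A(\mathcal{G}(n,k))$ off the data already assembled before the statement, and then reconstruct $g(x)$ from three $A$-invariant pieces. Abbreviate $A=A(\mathcal{G}(n,k))$ and $Q=Q(\mathcal{G}(n,k))$. Since $g(x)=\det(xI-A)$ is monic of degree $n$ and the proposed product has degree $(n-k-2)+(k-1)+3=n$, it suffices to match roots with their multiplicities. First I would record that the $k-1$ vectors in $E_1$ and the $n-k-2$ vectors in $E_2$ are linearly independent: within each list the supports and sign patterns make independence immediate, and since every vector of $E_1$ lies in the $(-1)$-eigenspace while every vector of $E_2$ lies in the $(+1)$-eigenspace, the two lists together are independent (independent sets inside distinct eigenspaces). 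Writing $W=\operatorname{span}(E_1\cup E_2)$ we get $\dim W=n-3$, and $\restr{A}{W}$ carries the eigenvalue $-1$ with multiplicity $k-1$ and the eigenvalue $+1$ with multiplicity $n-k-2$; hence $(x+1)^{k-1}(x-1)^{n-k-2}$ already divides $g(x)$.

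For the cubic factor I would use the equitable partition directly rather than only the set-level conclusion of Theorem~\ref{thm:equitable}. Let $U$ be the three-dimensional space of vectors that are constant on each cell of the partition $\{v_1,\dots,v_{2k-n+1}\}$, $\{v\}$, $\{u_1^{(i)},u_2^{(i)}:1\le i\le n-k-1\}$, spanned by the three cell-indicator vectors, and let $S$ be the $n\times 3$ matrix whose columns are these indicators. The defining property of an equitable partition is that every vertex in cell $i$ has exactly $q_{ij}$ neighbours in cell $j$, which is precisely the matrix identity $AS=SQ$. Consequently $U=\operatorname{col}(S)$ is $A$-invariant, and in the basis given by the columns of $S$ the restriction $\restr{A}{U}$ is represented by $Q$, so its characteristic polynomial is exactly the cubic $f(x)$ of Eqn.~\eqref{eqn:f(x)}.

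It then remains to glue the two pieces. Every vector in $E_1\cup E_2$ has zero sum on each cell, so $W\perp U$; as $\dim W=n-3$ and $\dim U=3$, this yields the orthogonal decomposition $\R^n=W\oplus U$. Because $A$ is symmetric and $U$ is $A$-invariant, $W=U^{\perp}$ is $A$-invariant too, and therefore
\[
g(x)=\det\big(xI-\restr{A}{W}\big)\cdot\det\big(xI-\restr{A}{U}\big)=(x-1)^{n-k-2}(x+1)^{k-1}f(x),
\]
as claimed. The stated observation that $1$ and $-1$ are not roots of $f$ then serves as a consistency check, confirming that the three factors share no common root and that the multiplicities of $\pm1$ are exactly $k-1$ and $n-k-2$.

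I expect the main obstacle to be exactly the upgrade from Theorem~\ref{thm:equitable}, which asserts only the set containment $\sigma(Q)\subseteq\sigma(A)$, to a genuine factorisation of characteristic polynomials with the correct multiplicities. The splitting $\R^n=W\oplus U$ into orthogonal $A$-invariant subspaces is what closes this gap, replacing a statement about spectra as sets by a block decomposition of $A$ whose two characteristic polynomials multiply to $g(x)$. A purely counting-based alternative is also available—using that the $n-3$ exhibited eigenvectors force the multiplicities of $\pm1$ to be at least $k-1$ and $n-k-2$, that $1,-1$ are not roots of $f$, and that $\deg f=3$—but it is less transparent than exhibiting the invariant splitting, so I would present the decomposition argument as the main line and relegate the degree count to a remark.
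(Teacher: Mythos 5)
Your argument is correct and follows essentially the same route as the paper, which likewise derives the theorem by combining the eigenvector families $E_1$ and $E_2$, the equitable quotient matrix $Q$ with characteristic polynomial $f$, and the observation that $\pm 1$ are not roots of $f$. The only difference is that you make the multiplicity bookkeeping fully rigorous via the identity $AS=SQ$ and the orthogonal $A$-invariant splitting $\R^n=U\oplus W$, a step the paper leaves implicit when it states that ``combining above observations leads us to the following theorem.''
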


It is easy to see that $K_{2k-n+2}$ and $K_{1,n-1}$ are proper subgraphs of $\mathcal{G}(n,k))$ and hence we have $$\rho(\mathcal{G}(n,k))) > \max\{2k-n+1, \sqrt{n-1}\}.$$ Next, using Theorem~\ref{thm:equitable} we can conclude that the spectral radius of $\mathcal{G}(n,k))$ is the largest root of $f(x)$, as defined in Eqn.~\eqref{eqn:f(x)}. For simplicity of further calculations, let us assume $a = 2k-n+1$ and $b = \sqrt{n-1}$. Now, using $a$ and $b$, the function $f(x)$ in Eqn.~\eqref{eqn:f(x)} can be rewritten as
\begin{equation}\label{eqn:f(x)-new}
	f(x) = x^3 -ax^2-(n-a)x + k - (k-a)(n-a).
\end{equation}
Now let us consider the following two cases.\\

\noindent \underline{\textbf{Case 1:}} $\left \lfloor \frac{n}{2} \right \rfloor + 1 \le k \le \frac{1}{2} \left(n-1+\sqrt{n-1}\right).$\\

Let $\rho(\mathcal{G}(n,k))) = a + d$, where $d > 0$. Then, $d$ is the largest root of
\begin{equation}
	f(a+x) = x^3 + 2ax^2 + (a^2-(n-a))x + k - k(n-a).
\end{equation} 
Next, using the fact $d > 0$, we have $2ad^2 + (a^2-(n-a))d + k - k(n-a) < 0$ and hence
$$ d < \frac{n-a(a+1) + \sqrt{(a^2+a-n)^2 + 8ak(n-1-a)}}{4a}.$$
Thus, we can conclude that 
$$ \rho(\mathcal{G}(n,k))) < a + \frac{n-a(a+1) + \sqrt{(a^2+a-n)^2 + 8ak(n-1-a)}}{4a},$$ where $a = 2k-n+1$.\\

\noindent \underline{\textbf{Case 2:}} $\frac{1}{2} \left(n-1+\sqrt{n-1}\right) < k \le n-1$.\\

Let $\rho(\mathcal{G}(n,k))) = b + e$, where $e > 0$. Then, $e$ is the largest root of
\begin{equation}
	f(b+x) = x^3 + (3b-a)x^2 + (2n-2ab+a-3)x +(1-a)(a-b)+k-k(n-a).
\end{equation}

Let $\alpha = 2n-2ab+a-3$ and $\beta = (1-a)(a-b)+k-k(n-a)$. Next, using the fact $e > 0$, we have $(3b-a)e^2 + \alpha e + \beta < 0$ and hence
$$e < \frac{\sqrt{\alpha^2 - 4 \beta (3b-a)} -\alpha }{2(3b-a)}.$$
Thus, we can conclude that 
$$ \rho(\mathcal{G}(n,k))) < b + \frac{\sqrt{\alpha^2 - 4 \beta (3b-a)} -\alpha }{2(3b-a)},$$ where $a = 2k-n+1$ and $b = \sqrt{n-1}$.

Combining above two cases we give an upper bound for the spectral radius of $\mathcal{G}(n,k)$ which we present as a theorem below.

\begin{theorem}
	Let $\left \lfloor \frac{n}{2} \right \rfloor + 1 \le k \le n-1$ and $G \in G(n,k)$ be a clique tree. Then,
	\begin{align*}
		\rho(G) < \begin{cases}
			a + \dfrac{n-a(a+1) + \sqrt{(a^2+a-n)^2 + 8ak(n-1-a)}}{4a} & \textup{ if } \left \lfloor \frac{n}{2} \right \rfloor + 1 \le k \le \frac{1}{2} \left(n-1+\sqrt{n-1}\right)\\
			\\
			b + \dfrac{\sqrt{\alpha^2 - 4 \beta (3b-a)} -\alpha }{2(3b-a)} & \textup{ if }  \frac{1}{2} \left(n-1+\sqrt{n-1}\right) < k \le n-1
		\end{cases}
	\end{align*}
	where $a = 2k-n+1$, $b = \sqrt{n-1}$, $\alpha = 2n-2ab+a-3$ and $\beta = (1-a)(a-b)+k-k(n-a)$.\\
\end{theorem}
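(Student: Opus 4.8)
The plan is to reduce the bound for an arbitrary $G \in G(n,k)$ to a bound on the single extremal graph $\mathcal{G}(n,k)$, and then to estimate $\rho(\mathcal{G}(n,k))$ by converting the cubic eigenvalue equation for its equitable quotient matrix into a quadratic inequality. First I would invoke the extremal theorem proved above, which gives $\rho(G) \le \rho(\mathcal{G}(n,k))$ for every $G \in G(n,k)$; hence it suffices to bound $\rho(\mathcal{G}(n,k))$. By Theorem~\ref{thm:equitable}, together with the factorization $g(x) = (x-1)^{n-k-2}(x+1)^{k-1} f(x)$ and the observation that $\pm 1$ are not roots of $f$, the quantity $\rho(\mathcal{G}(n,k))$ equals the largest root of the cubic $f$ in Eqn.~\eqref{eqn:f(x)-new}. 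Finally, since $K_{2k-n+2}$ and the star $K_{1,n-1}$ sit inside $\mathcal{G}(n,k)$ as proper subgraphs, monotonicity of the spectral radius (the edge-addition version being Lemma~\ref{lem:sr_edge}) yields $\rho(\mathcal{G}(n,k)) > \max\{a,b\}$ with $a = 2k-n+1$ and $b = \sqrt{n-1}$.

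The two cases correspond to whether $a$ or $b$ is the dominant lower bound. In Case 1, where $a \le b$, I would write $\rho(\mathcal{G}(n,k)) = a + d$ with $d > 0$; substituting $x \mapsto a + x$ into $f$ produces a monic cubic whose largest root is $d$, and since $d^3 > 0$ the remaining quadratic tail must be strictly negative, i.e. $2ad^2 + (a^2 - (n-a))d + k - k(n-a) < 0$. Because $a \ge 2 > 0$ on the admissible range, this is an upward-opening parabola in $d$, so $d$ lies strictly between its two roots; bounding $d$ above by the larger root and simplifying (using $a^2 - (n-a) = a^2 + a - n$ and $k - k(n-a) = -k(n-1-a)$) gives the stated Case 1 estimate. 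In Case 2, where $a > b$, I would analogously set $\rho(\mathcal{G}(n,k)) = b + e$ with $e > 0$; the substitution $x \mapsto b + x$ gives $f(b+x) = x^3 + (3b-a)x^2 + \alpha x + \beta$, and positivity of $e$ forces $(3b-a)e^2 + \alpha e + \beta < 0$, which I would solve for the appropriate root to obtain the Case 2 estimate.

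I expect the main obstacle to be justifying the quadratic-reduction step rigorously rather than the algebra itself. In Case 1 one must confirm that the parabola indeed has two real roots bracketing $d$ and that the larger root is the one appearing in the bound: this needs the discriminant $(a^2+a-n)^2 + 8ak(n-1-a) \ge 0$, which follows from $k \le n-1 \Rightarrow a \le n-1 \Rightarrow n-1-a \ge 0$, together with the sign of the constant term $-k(n-1-a) \le 0$ to locate the positive root. The more delicate point is Case 2: the leading coefficient $3b - a$ of the reduced quadratic is not sign-definite over $\frac{1}{2}(n-1+\sqrt{n-1}) < k \le n-1$ (it becomes negative once $a > 3b$, which happens for $k$ near $n-1$ with $n$ large), so the orientation of the parabola, the choice of root, and the nonnegativity of the radicand $\alpha^2 - 4\beta(3b-a)$ all require careful checking to guarantee that $\frac{\sqrt{\alpha^2 - 4\beta(3b-a)} - \alpha}{2(3b-a)}$ is a valid upper bound for $e$. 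Handling this sign analysis cleanly is where the bulk of the care will go.
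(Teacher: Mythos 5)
Your proposal follows essentially the same route as the paper: reduce to $\mathcal{G}(n,k)$ via the extremal theorem, identify $\rho(\mathcal{G}(n,k))$ with the largest root of the cubic $f$ through the equitable quotient matrix, lower-bound it by $\max\{a,b\}$ using the subgraphs $K_{2k-n+2}$ and $K_{1,n-1}$, and in each case discard the positive cubic term to obtain a quadratic inequality in the shift $d$ or $e$. The delicate point you flag in Case 2 --- that $3b-a$ can be negative for $k$ near $n-1$ once $n \ge 11$, so that solving $(3b-a)e^2+\alpha e+\beta<0$ for $e$ requires attention to the parabola's orientation and the sign of the radicand --- is a genuine issue, but it is equally unaddressed in the paper's own derivation, which silently divides by $2(3b-a)$ as though it were positive.
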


\noindent{\textbf{\Large Acknowledgements}}:The author is partially supported by the National Science and Technology Council in Taiwan (Grant ID: NSTC-111-2628-M-110-002). The author would also like to take the opportunity to thank Prof. Jephian Lin for carefully reading the article and giving valuable suggestions and comments.

\small{
}

\end{document}